\documentclass{article}
\usepackage[titletoc, title]{appendix}

\usepackage{hyperref}
\usepackage[english]{babel}

\usepackage{graphicx}
\usepackage{amssymb}
\usepackage{amsthm, amsmath,amssymb}
\usepackage[all]{xy}
\usepackage{graphicx}
\usepackage{psfrag}
\usepackage{multirow}
\usepackage{tikz}
\usepackage{color}

\usepackage[applemac]{inputenc}

\usepackage{multicol}

\usepackage {tikz}
\usepackage{tkz-berge}
\usetikzlibrary {positioning}

\usetikzlibrary{arrows,shapes}

\usetikzlibrary{decorations.markings}
\usetikzlibrary{plotmarks}
\usepackage{pgfplots}

\definecolor {processblue}{cmyk}{0.96,0,0,0}

%

\def\Rb{\mathbb{R}}		

\newcommand{\rk}{\operatorname{rank }}
\newcommand{\im}{\operatorname{Im}}

\newcommand{\restr}[1]
   {\vrule height1ex width.4pt
depth1.4ex\lower1.4ex\hbox{\scriptsize $\,#1$}}


\newtheorem{theorem}{Theorem}
\newtheorem{corollary}[theorem]{Corollary}
\newtheorem{proposition}{Proposition}

\newtheorem{lemma}{Lemma}

\theoremstyle{definition}
\newtheorem{definition}{Definition}
\newtheorem{remark}{Remark}

\newtheorem{example}{Example}

\author{Inês Cruz\thanks{Centro de Matem\'atica da Universidade do Porto (CMUP), Departamento de Matem\'atica,  Faculdade de Ci\^encias da Universidade do Porto, R. Campo Alegre, 687, 4169-007 Porto, Portugal. E-mails: imcruz@fc.up.pt, mmmatos@fc.up.pt.},\,\, Helena Mena-Matos\footnotemark[1]\,\, and M. Esmeralda Sousa-Dias\thanks{Center for Mathematical Analysis, Geometry and Dynamical Systems (CAMGSD),
Departamento de Matem\'atica, Instituto Superior T\'ecnico, Av. Rovisco Pais, 1049-001 Lisboa, Portugal. E-mail: e.sousa.dias@tecnico.ulisboa.pt.}}

\title {Multiple reductions, foliations and the dynamics of cluster maps} 
\begin{document}
\maketitle

\begin{abstract}
Reduction of cluster maps via presymplectic and Poisson structures is described in terms of the canonical foliations defined by these structures. In the case where multiple reductions coexist, we establish conditions on the underlying presymplectic and Poisson structures that allow for an interplay between the respective foliations. It is also shown how this interplay may be explored to  simplify the analysis and obtain an effective geometric description of the dynamics of the original (not reduced) map. Examples illustrating several features of this approach are presented.
\end{abstract}
\medskip

\noindent {\it MSC 2010:} 53D17, 53D05, 53C12 (Primary); 39A20, 13F60 (Secondary).

\noindent {\it Keywords:} presymplectic manifolds, Poisson manifolds, foliations, cluster maps.

\section{Introduction}

Cluster maps are birational maps defined via mutations, which are the main operations in the  theory of cluster algebras \cite{FoZe}, \cite{Ze}. More precisely, cluster maps are maps  associated to quivers (oriented graphs) satisfying a mutation-periodicity property, which give rise to  recurrences with the Laurent property. The study of these maps has been the subject of some recent works and can be found  for instance in \cite{FoHo2}, \cite{FoHo1}, \cite{InHeEs} and \cite{InHeEs2}.

Presymplectic and Poisson structures were introduced in the theory of cluster algebras in \cite{GeShVa2} and \cite{FoGo} (see also the monograph \cite{GeShVa} and references therein) and, among other applications, they were used to reduce cluster maps to symplectic maps and  to study their integrability (\cite{Fo}, \cite{FoHo1} and \cite{FoHo2}). The presymplectic and Poisson structures considered in that theory are meant to deal with mutations and are of a particular type. They are known, in cluster algebra theory, as \emph{log-canonical} structures, since they are constant in logarithmic coordinates. Only this type of presymplectic and Poisson structures will be considered in the present work. 

The most effective study of cluster maps is precisely the one that makes use of  log-canonical structures, and is achieved through reduction to spaces of lower dimension.   By {\it reduction} of a given map $\varphi$ we mean the existence of a  map $\psi$ and of a submersion $\pi$ onto a lower dimensional space, such that $\pi\circ \varphi=\psi\circ\pi$. The map $\psi$ will be called a {\it reduced map of $\varphi$} and the pair $(\psi,\pi)$ will be called a {\it reduced system}.

Reduction of cluster maps via presymplectic and Poisson structures was obtained by Fordy and Hone in \cite{FoHo1} for maps associated to mutation-periodic quivers of period 1 and by Cruz and Sousa-Dias \cite{InEs2} for mutation-periodic quivers with arbitrary period. Presymplectic reduction is always possible as long as the skew-symmetric matrix $B$ defining the quiver is singular and produces a reduced map defined on a space whose dimension is equal to the rank of $B$. 
 
On the contrary, reduction via  Poisson structures may fail to exist due to the lack of a nontrivial Poisson structure for which the cluster map is a Poisson map.  Moreover, it may happen that there are different Poisson structures leading to reductions of the same cluster map to spaces of different dimensions (and equal to the corank of the skew-symmetric matrices defining the respective Poisson structures). 

In this work we explore the coexistence of multiple reductions (presymplectic and Poisson) and its consequences to the dynamics of a cluster map. Each reduction process gives rise to two kind of objects: (a) a foliation of the domain of the cluster map defined by the fibres of a submersion $\pi$; (b) a reduced system $(\psi, \pi)$. When multiple reductions exist, we show that there are conditions on the underlying presymplectic and Poisson structures  which guarantee that they define a \emph{flag of foliations}. The consequences  of the existence of such flag of foliations  to the dynamics of the cluster map is further explored, showing how to use it to simplify  the study of the map and obtain a better geometric description of its dynamics. 

Reduction of cluster maps via presymplectic forms has been widely applied and often leads to integrable (reduced) maps \cite{FoHo2}, \cite{FoHo1}. It has also played an important role in the study of the original map (see \cite{Ho} and \cite{HoSw}), more precisely, in obtaining the general solution for the associated recurrence. The existence of multiple reductions via Poisson structures has not yet witnessed the same level of development in the study of cluster maps, the study performed here is one of the possible directions   in the use of (compatible) Poisson structures in the dynamics of cluster maps. 

The structure of the paper is as follows. Section~\ref{back} is devoted to background material on cluster maps. 
  In Section~\ref{red}, we consider log-canonical presymplectic and Poisson structures and describe, in terms of submersions, the null foliation and the symplectic foliation  determined by these structures. We also  give necessary and sufficient conditions for a null foliation to be a simple subfoliation of a symplectic foliation (and vice-versa) and for a symplectic foliation to be a simple subfoliation of another symplectic foliation. 
 Section~\ref{din} explores the consequences to the dynamics of cluster maps of the existence of multiple reductions, in particular how   the dynamics of the multiple reduced systems  allows us to draw conclusions on the dynamics of the original map. The last section is devoted to examples illustrating the main results of the paper: the Somos-5 recurrence and a particular instance of the Somos-7 recurrence which fits into the cluster algebra setting.
 \section{Preliminaries on cluster maps}\label{back}

The notion of mutation-periodic quiver was introduced in \cite{FoMa} and gives rise to the definition of {\it  cluster map} as follows.  A quiver is an oriented graph with $N$ nodes and  with (possibly multiple) arrows  between the nodes. If the quiver has no loops nor 2-cycles, it is represented by an $N\times N$ skew-symmetric matrix $B=[b_{ij}]$ whose positive entries $b_{ij}$ denote the number of arrows from node $i$ to node $j$. These are the only quivers we consider and they will be denoted by $Q_B$.

To each node $i$ of a quiver $Q_B$ one associates a variable $x_i$, called {\it cluster variable}. The $N$-tuple $\mathbf{x}=(x_1, \ldots, x_N)$ is known as the {\it initial cluster} and the pair $(B,\mathbf{x})$ is called the {\it initial seed}.

The {\it mutation} $\mu_k$ at the node $k$ of  a quiver, is  an operation that transforms a seed $(B,\mathbf{x})$ into the seed $(B',\mathbf{x'})$ as follows:
\begin{itemize}
\item $\mu_k(B)= B'=\left[b'_{ij}\right]$ with
\[ b'_{ij}=\begin{cases}
-b_{ij},&\text{$ (k-i)(j-k)=0$}\\
b_{ij}+\frac{1}{2} \left(|b_{ik}|b_{kj}+ b_{ik}|b_{kj}|\right),&\text{otherwise}.
\end{cases}
\]
\item $\mu_k(x_1, \ldots, x_N)=\mathbf{x'}=(x_1,\ldots,x_{k-1}, x'_k,x_{k+1}, \ldots, x_N)$, with
\[x'_k=\frac{\prod_{j: b_{kj}\geq0}x_j^{b_{kj}}+\prod_{j: b_{kj}\leq0}x_j^{-b_{kj}}}{x_k}.
\]
\end{itemize}

A quiver $Q_B$ is said to be mutation-periodic if there exists a positive integer $m$ such that
\[ \mu_m\circ\cdots\circ\mu_1(B)= \sigma^{-m} B \sigma^m. \]
where $\sigma$ is the permutation $\sigma: (1,2,\ldots,N)\mapsto (2,3,\ldots,N,1)$. The mutation-period of   $Q_B$ is the smallest integer $m$ for which the above identity holds.

A mutation-periodic quiver of period $m$ gives rise to a system of $m$ recurrence relations of order $N$ whose solutions correspond to the orbits (under iteration) of the following map:
\[
 \varphi(\mathbf{x})=\sigma^m\circ\mu_m\circ\cdots\circ\mu_1 (\mathbf{x}). 
\]
This map is called the {\it cluster map} associated to the quiver.

\begin{example}\label{ex1}  

If $r$ and $s$ are non-negative integers, the matrix
\[
B=\begin{bmatrix}
0&r&-1&-1&s\\
-r&0&r+s&r-1&-1\\
1&-r-s&0&r+s&-1\\
1&1-r&-r-s&0&r\\
-s&1&1&-r&0
\end{bmatrix}
\]

\noindent defines a 1-periodic quiver if $r=s$ and a 2-periodic quiver otherwise. The cluster map is
\begin{equation}\label{somos5}
\varphi(x_1, \ldots, x_5)= \left(x_2,x_3,x_4,x_5, \frac{x_2^r x_5^r+x_3x_4}{x_1}\right)
\end{equation}
in the 1-periodic case (i.e. $r=s$) and 
\small
\begin{equation}\label{map5nos2per}
\varphi(x_1,x_2,x_3,x_4,x_5) =  \left(x_3,x_4,x_5,\frac{x_2^rx_5^s+x_3x_4}{x_1},\frac{x_3^s(x_2^rx_5^s+x_3x_4)^r+x_1^rx_4 x_5}{x_1^r x_2} \right)
\end{equation}
\normalsize
in the 2-periodic case.

The cluster map \eqref{somos5}  is associated to the recurrence relation  of order 5
\[x_{n+5}x_n= x_{n+1}^r x_{n+4}^r+x_{n+2}x_{n+3},\]
which, when $r=1$, is the well-known Somos-5 recurrence relation. On the other hand, the cluster map \eqref{map5nos2per} is associated to the following system  of two recurrence relations of order 5
\[\left\{\begin{aligned}
x_{2n+4}x_{2n-1}&= x_{2n}^r x_{2n+3}^s+x_{2n+1}x_{2n+2}\\
x_{2n+5}x_{2n}&= x_{2n+1}^s x_{2n+4}^r+x_{2n+2}x_{2n+3}.
\end{aligned}\right.
\]
We refer to \cite{FoMa} and \cite{InEs2} for details.
\end{example}
Because each mutation is an involution, every cluster map is a birational map, that is, rational with rational inverse. 
As our interest in cluster maps lies on their dynamics, we will consider their domain of definition to be  $\Rb_+^N$ (the set of points where all coordinates are strictly positive) which guarantees that any of its iterates is well defined. 

Reduction of a cluster map $\varphi$ by means of a presymplectic structure was proved in \cite{FoHo1} for 1-periodic quivers (defined in $\mathbb{C}^N$) and in \cite{InEs2} for quivers with arbitrary period. It relies on the existence of a presymplectic structure which is invariant under $\varphi$, as proved in the same references.   In fact, the log-canonical presymplectic form
\begin{equation}
\label{omega}
\omega = \sum_{1\leq i<j\leq N} \frac{b_{ij}}{x_ix_j} dx_i  \wedge dx_j, 
\end{equation}
where $B$ is the matrix defining the quiver associated to $\varphi$, is invariant under $\varphi$.  

The existence of a submersion $\widehat\pi:\Rb_+^N \rightarrow \Rb_+^{2k}$, with $2k=\rk B<N$, and of a (symplectic) reduced map $\widehat\varphi:\Rb_+^{2k} \rightarrow \Rb_+^{2k}$ such that $\widehat\pi\circ\varphi=\widehat\varphi \circ \widehat\pi$, follows from Darboux's theorem \cite{LiMa} applied to the presymplectic form \eqref{omega}.

In what concerns the reduction of a cluster map $\varphi$ by means of  a log-canonical Poisson structure $P$, this is possible if there exists such a (nontrivial) Poisson structure which is invariant under $\varphi$.  More precisely, if there exists a Poisson structure of the form
\begin{equation}\label{PP}
P=\sum_{1\leq i<j\leq N} c_{ij} x_ix_j \frac{\partial}{\partial x_i}\wedge\frac{\partial}{\partial x_j},
\end{equation}
satisfying $\varphi_*P=P$, where $C=[c_{ij}]$ is an integer skew-symmetric matrix with nontrivial kernel (here $\varphi_*$ denotes the pushforward by $\varphi$).
Examples where such reduction is possible can be found for example in \cite{FoHo1} and \cite{InEs2}.
 
Again, reduction by  a Poisson structure translates  into the existence of a submersion $\widetilde\pi:\Rb_+^N \rightarrow \Rb_+^{s}$, where   $s=\dim \ker C$,  and of a (reduced) map $\widetilde\varphi:\Rb_+^{s} \rightarrow \Rb_+^{s}$ such that $\widetilde\pi\circ\varphi=\widetilde\varphi \circ \widetilde\pi$  (see \cite[Theorem 5.1]{InEs2}).

Throughout the next sections, $B=[b_{ij}]$ will denote an integer skew-symmetric $N\times N$ matrix representing a mutation-periodic quiver $Q_B$ with period $m$ and $\varphi = \sigma^m\circ\mu_m \circ \cdots \circ \mu_1$  the associated cluster map defined on $\Rb_+^N$. Both  the presymplectic form $\omega$ in \eqref{omega}  and Poisson structures   of the form \eqref{PP} will be restricted to $\Rb^N_+$. Also,  unless otherwise stated, we will  assume that $\rk B=2 k<N$ and  $\dim \ker C= s >0$.

\section{Reductions of cluster maps and associated  foliations}\label{red}
In this section we characterise the foliations associated to reductions of a cluster map $\varphi$ via the presymplectic form $\omega$ in \eqref{omega} and via Poisson structures $P$ of the form \eqref{PP} which are invariant under $\varphi$.

Presymplectic manifolds and Poisson manifolds are natural generalizations of symplectic manifolds.  Each of these manifolds has its natural foliation, the \emph{null foliation} in the case of a presymplectic manifold and the \emph{symplectic foliation} in the case of a Poisson manifold.  Although these foliations are usually defined using integrable distributions  (see for example \cite{LiMa}, \cite{Bur} and references therein) and are in general \emph{singular} foliations (that is, distinct leaves can have different dimensions) we will show that, in our setting, each of these foliations   is  not only regular but also given by a surjective submersion $\pi$ with connected fibres.  In particular, this means that each leaf $L_\alpha$ of the foliation is a fibre of $\pi$: $L_\alpha=\pi^{-1}(\alpha)$. Following the terminology in \cite{MoMr} we will call these foliations {\it strictly simple foliations}.

\subsection{The null foliation and reduction via a presymplectic form}

The next proposition shows that the null foliation determined by the presymplectic form $\omega$ in \eqref{omega} is a strictly simple foliation. Its leaves are known as \emph{null leaves}.

\begin{proposition}\label{prop1} Let $\omega$ be the presymplectic form defined by the matrix $B$ as in \eqref{omega}   with $\rk B=2k$. Then, there exists a   surjective submersion $\widehat\pi: \Rb_+^N \rightarrow \Rb_+^{2k}$  with  connected fibres $N_{\alpha}=\widehat\pi^{-1}(\alpha)$ which determines the null foliation ${\cal F}^\omega$ of $\Rb_+^N$, that is
\[{\cal F}^\omega=\bigsqcup_{\alpha} N_{\alpha},\]  
with $\omega\restr{N_\alpha}=0$. Moreover the leaves $N_\alpha$ are algebraic varieties. 
\end{proposition}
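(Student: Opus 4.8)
The plan is to exploit the log-canonical (constant-in-logarithmic-coordinates) nature of $\omega$ to reduce everything to linear algebra on the matrix $B$. First I would pass to logarithmic coordinates $u_i = \log x_i$, which gives a global diffeomorphism $\Rb_+^N \to \Rb^N$; in these coordinates $\omega$ becomes the \emph{constant-coefficient} 2-form $\omega = \sum_{i<j} b_{ij}\, du_i \wedge du_j$, so its kernel distribution is the constant subspace $K = \ker B \subseteq \Rb^N$, of dimension $N-2k$. The null foliation in $u$-coordinates is then simply the foliation of $\Rb^N$ by the affine subspaces parallel to $K$, which is manifestly a strictly simple foliation: choosing an integer basis of $\mathrm{row}(B) = (\ker B)^\perp$ (or just the quotient projection $\Rb^N \to \Rb^N/K \cong \Rb^{2k}$) gives a linear surjective submersion with connected (affine) fibres equal to the leaves.

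The second step is to push this back to $\Rb_+^N$ so that the resulting submersion lands in $\Rb_+^{2k}$ and has fibres that are \emph{algebraic} varieties. Here I would use that $B$ is an integer matrix: its row space admits a basis of integer vectors, so one can pick integer vectors $v^{(1)},\dots,v^{(2k)}$ whose span is $\mathrm{row}(B)$, and define $\widehat\pi(\mathbf{x}) = (\mathbf{x}^{v^{(1)}},\dots,\mathbf{x}^{v^{(2k)}})$, where $\mathbf{x}^{v} := \prod_i x_i^{v_i}$ is a monomial (Laurent) in the $x_i$. Because in log-coordinates this map is exactly the linear surjection $\Rb^N \to \Rb^{2k}$ with kernel $K$ composed with the diffeomorphism $u\mapsto x$, it is a surjective submersion onto $\Rb_+^{2k}$ with connected fibres, and $\omega$ restricts to zero on each fibre (the tangent space of a fibre is precisely $K$ in $u$-coordinates). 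Finally, to see the fibres are algebraic varieties, clear denominators: each level set $\{\mathbf{x}^{v^{(j)}} = \alpha_j\}$ can be rewritten, by separating positive and negative exponents, as a polynomial equation $\prod_{v^{(j)}_i>0} x_i^{v^{(j)}_i} = \alpha_j \prod_{v^{(j)}_i<0} x_i^{-v^{(j)}_i}$, so $N_\alpha$ is the intersection with $\Rb_+^N$ of an affine algebraic variety in $\Rb^N$.

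The one genuinely delicate point — and the step I would single out as the main obstacle — is \emph{connectedness} of the fibres together with the claim that $\widehat\pi$ correctly recovers the null foliation as an honest (regular) foliation rather than just a distribution. Since the map is linear in logarithmic coordinates, each fibre is the image under the exponential diffeomorphism of a single affine subspace of $\Rb^N$, hence connected; this is where working over $\Rb_+^N$ (rather than $\Cb^N$, where the exponential is not injective) is essential, and it is worth stating explicitly that this is precisely why we restrict to positive coordinates. One should also check that $\mathrm{rank}\,\omega$ is constant, equal to $2k$ everywhere — immediate from constancy of $B$ — which is what guarantees the null distribution has constant rank and, being the kernel of a closed form, is involutive; Darboux/Frobenius then applies, but in fact the explicit submersion $\widehat\pi$ makes the foliation strictly simple directly, bypassing the general singular-foliation machinery. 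A minor remaining item is to confirm the target is all of $\Rb_+^{2k}$ (surjectivity), which follows because the linear map $\Rb^N\to\Rb^{2k}$ defined by the $v^{(j)}$ is onto and $\exp$ is onto $\Rb_+^{2k}$.
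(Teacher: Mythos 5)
Your proposal is correct and follows essentially the same route as the paper: pass to logarithmic coordinates where $\omega$ becomes the constant form $\sum_{i<j}b_{ij}\,dv_i\wedge dv_j$, observe that the leaves are the affine subspaces parallel to $(\im B)^\perp=\ker B$, pick integer vectors spanning $\im B=\mathrm{row}(B)$ so that the submersion $\widehat\pi$ is given by Laurent monomials, and deduce connectedness, vanishing of $\omega$ on the fibres, and algebraicity exactly as the authors do. The only (immaterial) difference is that you establish $\omega\restr{N_\alpha}=0$ by identifying $TN_\alpha$ with the kernel of the constant form, whereas the paper first invokes \'Elie Cartan's theorem to write $\omega$ in the normal form \eqref{wnormal}, a step it reuses later to exhibit the reduced symplectic structure.
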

\begin{proof} Let $\Phi:  \Rb_+^N \rightarrow  \Rb^N$ be the diffeomorphism given by 
\begin{equation}
\label{phi}
\Phi (x_1, \ldots, x_N)=(\ln x_1, \ldots, \ln x_N).
\end{equation}
The presymplectic form $\omega'=\left (\Phi^{-1}\right )^*\omega$ is the constant presymplectic form on $\Rb^N$, given by
\[\omega'= \sum_{1\leq i<j\leq N} b_{ij} dv_i  \wedge dv_j.\]
By \'Elie Cartan's theorem (see \cite{LiMa}), there exist $2k$ independent linear functions on $\Rb^N$ of the form $f_i({\bf v})= {\bf u}_i\cdot {\bf v}$ with ${\bf u}_i \in \im B$, such that $\omega'$ is written as 
\[\omega'= \sum_{m=1}^k  df_{2m-1}\wedge df_{2m}.\]
Now consider the foliation of $\Rb^N$ defined by the submersion  $\widehat\Pi:\Rb^N  \longrightarrow \Rb^{2k}$ given by
\[\widehat\Pi(\mathbf{v})= (f_1(\mathbf{v}), \ldots, f_{2k}(\mathbf{v})).
\]
The fibre of $\widehat\Pi$ through ${\bf v_0}$ is  $N' = \{\mathbf{v}\in \Rb^N:\,\, \widehat\Pi(\mathbf{v})=\widehat\Pi(\mathbf{v_0})\}$, which coincides with the affine subspace 
\[\mathbf{v_0} + \left ( \im B\right )^\perp.\] 
In particular: (a) each fibre of $\widehat\Pi$ is connected and therefore constitutes a leaf of the foliation; (b) this foliation depends only on $\im B$ and not on the particular choice of the linear functions $f_1, \ldots , f_{2k}$. 

In terms of the functions $y_i({\bf x}) = {\bf x}^{{\bf u}_ i}$, the expression of $\omega$ is given by:
\begin{equation}
\label{wnormal}
\omega = \sum_{m=1}^k  \frac{dy_{2m-1}}{y_{2m-1}}\wedge \frac{dy_{2m}}{y_{2m}}.
\end{equation}

Moreover, each fibre $N_\alpha$ of the surjective submersion $\widehat\pi :\Rb_+^N  \longrightarrow \Rb_+^{2k}$,
\begin{equation}
\label{pi}
\widehat\pi: \mathbf{x}  \longmapsto (y_1(\mathbf{x}), \ldots, y_{2k}(\mathbf{x}))
\end{equation}
is connected, since it is $\Phi$-diffeomorphic to a connected fibre $N'$ of $\widehat\Pi$. 

Note that $N_{\alpha} = \{\mathbf{x}\in \Rb_+^N:\,\, \widehat\pi (\mathbf{x})=\alpha\}$ and so $dy_i$ vanishes on $TN_\alpha$. Hence  the identity \eqref{wnormal} leads to the conclusion that $\omega$ vanishes on $N_\alpha$.

Finally, as $B$ is an integer matrix then each $f_i$ can be chosen to have  rational coefficients, which implies that each $y_i$ can be chosen to be a Laurent monomial. Thus, each fibre $N_\alpha$ is an algebraic variety. 
\end{proof}

As the presymplectic form $\omega$ in \eqref{omega} is invariant under the cluster map $\varphi$ defined by $B$  (see \cite[Theorem 3.1]{InEs2}), it turns out that  the submersion $\widehat{\pi}$  in \eqref{pi} defining the foliation ${\cal F}^\omega$  reduces the cluster map $\varphi$ to a  symplectic map $\widehat\varphi$ on $\Rb_+^{2k}$. This means that $\widehat{\pi}\circ\varphi=\widehat{\varphi}\circ\widehat{\pi}$ where $\widehat{\varphi}$ is a symplectic map preserving the symplectic form in \eqref{wnormal}.

\subsection{The symplectic foliation and reduction via a Poisson structure}

We now consider a  Poisson structure $P$ as in \eqref{PP} with $\ker C\neq \{ 0 \}$. The following proposition shows that the symplectic foliation of the regular Poisson manifold $(\Rb_+^N, P)$ is also strictly simple. Its leaves are known as \emph{symplectic leaves}.

\begin{proposition}\label{prop2} Let  $P$ be the Poisson structure on $\Rb_+^N$  defined by the matrix  $C=[c_{ij}]$ with $s$-dimensional kernel as in \eqref{PP}. Then, there exists a surjective submersion $\widetilde\pi: \Rb_+^N \rightarrow \Rb_+^{s}$ 
 with connected fibres $S_\beta= \widetilde\pi^{-1}(\beta)$, which determines  the symplectic foliation ${\cal F}^P$ of $\Rb_+^N$, that is
\[{\cal F}^P=\bigsqcup_{\beta} S_{\beta},\]  
where $S_\beta$ is a symplectic manifold. 

Moreover the leaves $S_\beta$ are algebraic varieties.
\end{proposition}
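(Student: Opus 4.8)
The plan is to follow the strategy of the proof of Proposition~\ref{prop1}: pass to logarithmic coordinates, where $P$ becomes a \emph{constant} Poisson bivector, and exploit the fact that the symplectic foliation of a constant Poisson structure is by parallel affine subspaces.

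First I would apply the diffeomorphism $\Phi$ of \eqref{phi}. Since $\frac{\partial}{\partial v_i} = x_i\,\frac{\partial}{\partial x_i}$, the pushforward $P' = \Phi_{*}P$ is the constant bivector
\[
P' = \sum_{1\le i<j\le N} c_{ij}\,\frac{\partial}{\partial v_i}\wedge\frac{\partial}{\partial v_j}
\]
on $\Rb^N$, whose sharp map $(P')^{\sharp}\colon T^{*}\Rb^N\to T\Rb^N$ is, at every point, the linear map with matrix $C$. Hence the characteristic distribution of $P'$ is the constant distribution $\im C$. Since $C$ is skew-symmetric one has the orthogonal decomposition $\Rb^N = \ker C\oplus\im C$ and $C$ restricts to an isomorphism of $\im C$; therefore the symplectic leaves of $P'$ are exactly the affine subspaces $\mathbf{v}_0 + \im C$, each carrying the symplectic form obtained by inverting $C|_{\im C}$.

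Next I would realise these leaves as fibres of a submersion defined by Casimirs. A basis $\mathbf{w}_1,\dots,\mathbf{w}_s$ of $\ker C$ can be chosen with rational, hence integer, entries since $C$ is an integer matrix, and the linear functions $g_i(\mathbf{v}) = \mathbf{w}_i\cdot\mathbf{v}$ are then independent Casimirs of $P'$ (because $C\mathbf{w}_i = 0$); the submersion $\widetilde\Pi\colon\mathbf{v}\mapsto(g_1(\mathbf{v}),\dots,g_s(\mathbf{v}))$ has as fibre through $\mathbf{v}_0$ the affine subspace $\mathbf{v}_0 + (\ker C)^{\perp} = \mathbf{v}_0 + \im C$. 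These fibres are connected, they coincide with the symplectic leaves of $P'$, and the resulting foliation depends only on $\ker C$. Transporting back through $\Phi$, I would set $z_i(\mathbf{x}) = \mathbf{x}^{\mathbf{w}_i}$, a Laurent monomial since $\mathbf{w}_i$ is an integer vector, and note that $z_i = \exp(g_i\circ\Phi)$ is a Casimir of $P$; then
\[
\widetilde\pi\colon\Rb_+^N\longrightarrow\Rb_+^{s},\qquad \widetilde\pi(\mathbf{x}) = (z_1(\mathbf{x}),\dots,z_s(\mathbf{x}))
\]
is a surjective submersion whose fibres $S_\beta = \widetilde\pi^{-1}(\beta)$ are $\Phi$-diffeomorphic to the connected fibres of $\widetilde\Pi$; consequently they are connected, they are the symplectic leaves of $P$, and, being cut out by the Laurent monomials $z_i$, they are algebraic varieties.

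The step I expect to demand most care is the linear-algebra claim identifying the symplectic leaves of the constant bivector $P'$ with the affine subspaces parallel to $\im C$ and showing that the induced leafwise bivector is nondegenerate, so that each $S_\beta$ is genuinely a symplectic manifold. This rests on the decomposition $\Rb^N = \ker C\oplus\im C$ and on the invertibility of $C$ on $\im C$, both special to skew-symmetric $C$; once these are in hand, connectedness, surjectivity and the algebraic nature of the fibres follow exactly as in Proposition~\ref{prop1}.
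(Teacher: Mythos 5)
Your proposal is correct and follows essentially the same route as the paper: pass to logarithmic coordinates via $\Phi$, identify the symplectic leaves of the resulting constant Poisson structure $P'$ as the affine subspaces $\mathbf{v}_0+(\ker C)^{\perp}$ cut out by the linear Casimirs $\mathbf{u}\cdot\mathbf{v}$ with $\mathbf{u}\in\ker C$, and transport back to $\Rb_+^N$ to get the monomial submersion $\widetilde\pi$ with connected algebraic fibres. The only difference is that you spell out the linear-algebra fact (the decomposition $\Rb^N=\ker C\oplus\im C$ and the nondegeneracy of $C$ on $\im C$) that the paper invokes as well known.
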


\begin{proof} Consider again the diffeomorphism $\Phi:  \Rb_+^N \rightarrow  \Rb^N$ given by \eqref{phi}. The pushforward of $P$ by $\Phi$ is the Poisson structure given by
\[P' =\Phi_*P=\sum_{1\leq i<j\leq N} c_{ij} \frac{\partial}{\partial v_i}\wedge\frac{\partial}{\partial v_j},\]
which is a constant Poisson structure on the vector space $\Rb^N$. The symplectic leaves of such Poisson   structure are well known: they are affine subspaces which coincide with the common level sets of a maximal set of independent linear Casimirs. Note that a linear function $f({\bf v})= {\bf u}\cdot {\bf v}$ is a Casimir of $P'$ if and only if ${\bf u} \in \ker C$. 

Therefore the symplectic leaves of $P'$ can be defined as the (connected) fibres of the surjective submersion $\widetilde\Pi:\Rb^N \longrightarrow \Rb^{s} $,
\[\widetilde{\Pi}: \mathbf{v}  \longmapsto (f_1(\mathbf{v}), \ldots, f_{s}(\mathbf{v}))
\]
where $f_1,\ldots ,f_s$ are $s$ independent linear functions of the form
$f_i({\bf v})= {\bf u}_i\cdot {\bf v}$, with ${\bf u}_i \in \ker C$.
It can easily be checked that the fibre $S'$ of $\widetilde\Pi$ through ${\bf v_0}$ can be written as 
\[\mathbf{v_0} + \left ( \ker C\right )^\perp.\] 
In particular, this foliation depends only on $\ker C$ and not on the particular choice of the linear functions $f_1, \ldots , f_{s}$. 

The functions $z_i=\exp(f_i \circ \Phi)$, which have the form 
\[z_i({\bf x})={\bf x}^{{\bf u}_i},\quad \mathbf{u}_i\in \ker C,\]
form a maximal set of independent Casimirs of the Poisson structure $P$. The map $\widetilde\pi: \Rb_+^N  \longrightarrow \Rb_+^s$, 
\begin{equation}
\label{pis}
\widetilde\pi: \mathbf{x}  \longmapsto (z_1(\mathbf{x}), \ldots, z_s(\mathbf{x})),
\end{equation}
is a surjective submersion and each of its fibres, 
\[S_\beta= \left\{\mathbf{x}\in\Rb_+^N:\,\, \widetilde\pi(\mathbf{x})= \beta\right\},\]
is connected since it is $\Phi$-diffeomorphic to a fibre of $\widetilde\Pi$. As each $S_\beta$ is connected and a common level set of $s$ independent Casimirs of $P$, then $S_\beta$ is a symplectic leaf of $P$. 

Finally, as $C=[c_{ij}]$ is an integer matrix, then each vector ${\bf u}$ in $\ker C$ can be chosen to have integer  components, which implies that the fibres $S_\beta$ are algebraic varieties.
\end{proof}

In general, a Poisson structure of the form \eqref{PP} does not allow for reduction of the cluster map $\varphi$ unless $P$ is invariant under $\varphi$, that is, $\varphi_*P=P$ or equivalently    $\varphi$ is a {\it Poisson map}. In such case the submersion $\widetilde\pi$ in \eqref{pis}, defining the foliation ${\cal F}^P$,  satisfies the conditions described in \cite[Theorem 5.1, Lemma 5.2 ]{InEs2}  and so it reduces $\varphi$  to a map $\widetilde\varphi: \Rb^s_+\rightarrow  \Rb^s_+$, that is, $\widetilde\pi \circ \varphi = \widetilde\varphi \circ \widetilde\pi.$

\subsection {Subfoliations and Multiple Reductions}
\label{ssprePois}

We now look for conditions under which the symplectic foliation ${\cal F}^P$ determined by a Poisson structure $P$ is a \emph{subfoliation} of the null foliation ${\cal F}^\omega$ defined by a presymplectic form $\omega$ (or vice-versa). We will also be interested in considering two distinct symplectic foliations, ${\cal F}^{P_1}, {\cal F}^{P_2}$, and on conditions under which ${\cal F}^{P_2}$ is a subfoliation of ${\cal F}^{P_1}$. 
 \begin{definition}\label{defsub}
Let ${\cal F}_1$ and ${\cal F}_2$ be two simple foliations of a manifold $M$ given respectively by surjective submersions $\pi_1:M \rightarrow N_1$ and $\pi_2:M \rightarrow N_2$ with $\dim N_1<\dim N_2<\dim M$. 

The foliation ${\cal F}_2$ is said to be a \emph{simple subfoliation of} ${\cal F}_1$, and will be denoted by ${\cal F}_2\prec {\cal F}_1$, if there exists a surjective submersion $p: N_2\rightarrow N_1$ such that 
\[
p\circ \pi_2 = \pi_1.
\]
\end{definition}
Schematically, a simple subfoliation ${\cal F}_2$ of ${\cal F}_1$ is described by the following diagram

\[\xymatrix{ M   \ar@/_2pc/[rr]_{\pi_1}  \ar @{->} [r]^{\hspace{0cm}\pi_2} & N_2 \ar @{->}[r]^{\hspace{-0.1cm}p}&N_1
 }\]
\begin{remark}
It is clear from Definition~\ref{defsub}, that if ${\cal F}_2\prec {\cal F}_1$ then any leaf of ${\cal F}_2$ is contained in a leaf of ${\cal F}_1$, that is, ${\cal F}_2$ is indeed a subfoliation of ${\cal F}_1$. 
\end{remark}
\begin{proposition}
\label{propnest}
Let $\omega$ be the presymplectic form in \eqref{omega} determined by the matrix $B$.  Let $P_1$ and $P_2$ be Poisson structures on $\Rb_+^N$ determined by $C_1$ and $C_2$ as in \eqref{PP}. Then:
\begin{enumerate}
\item ${\cal F}^{P_i}\prec {\cal F}^\omega$ if and only if  $\im B \subset \ker C_i$.
\item ${\cal F}^\omega \prec{\cal F}^{P_i}$ if and only if $\ker C_i \subset \im B$.
\item ${\cal F}^{P_2}\prec {\cal F}^{P_1}$ if and only if $\ker C_1 \subset \ker C_2$.
\end{enumerate} 
\end{proposition}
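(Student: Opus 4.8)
\medskip
\noindent\textbf{Proof strategy.} The plan is to collapse all three biconditionals into a single statement about linear surjections of $\Rb^N$, exploiting the fact --- implicit in the proofs of Propositions~\ref{prop1} and~\ref{prop2} --- that both $\widehat\pi$ and $\widetilde\pi$ factor through the logarithmic diffeomorphism $\Phi$ of \eqref{phi}. Concretely, fix a basis $\mathbf{u}_1,\dots,\mathbf{u}_{2k}$ of $\im B$ and a basis $\mathbf{u}_1^{(i)},\dots,\mathbf{u}_{s_i}^{(i)}$ of $\ker C_i$, and let $L_B\colon\Rb^N\to\Rb^{2k}$ and $L_{C_i}\colon\Rb^N\to\Rb^{s_i}$ be the surjective linear maps $\mathbf{v}\mapsto(\mathbf{u}_1\cdot\mathbf{v},\dots,\mathbf{u}_{2k}\cdot\mathbf{v})$ and $\mathbf{v}\mapsto(\mathbf{u}_1^{(i)}\cdot\mathbf{v},\dots,\mathbf{u}_{s_i}^{(i)}\cdot\mathbf{v})$ they determine. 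Then $\widehat\pi=E\circ L_B\circ\Phi$ and $\widetilde\pi_i=E\circ L_{C_i}\circ\Phi$, where $E$ denotes the coordinatewise exponential (a diffeomorphism onto $\Rb_+^d$, in whichever dimension $d$ is relevant); moreover $\ker L_B=(\im B)^\perp$, $\ker L_{C_i}=(\ker C_i)^\perp$, and --- as those propositions record --- the resulting foliations do not depend on the chosen bases.

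First I would isolate the elementary lemma: for surjective linear maps $L\colon\Rb^N\to\Rb^a$ and $L'\colon\Rb^N\to\Rb^b$ there exists a smooth submersion $q\colon\Rb^b\to\Rb^a$ with $q\circ L'=L$ if and only if $\ker L'\subseteq\ker L$. For the ``if'' direction, the inclusion of kernels lets $L$ factor as $L=\bar q\circ L'$ with $\bar q$ linear; since $L$ is onto, so is $\bar q$, hence $\bar q$ is a submersion. For the ``only if'' direction, if $q\circ L'=L$ and $\mathbf{v}\in\ker L'$ then $L(\mathbf{v})=q\big(L'(\mathbf{v})\big)=q(0)=q\big(L'(\mathbf 0)\big)=L(\mathbf 0)=0$, so $\mathbf{v}\in\ker L$; note this direction uses nothing about $q$ beyond the identity $q\circ L'=L$.

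Next I would transfer this to the foliations. By Definition~\ref{defsub}, for foliations ${\cal F}_1,{\cal F}_2$ presented by submersions $\pi_j=E\circ L_j\circ\Phi$, the relation ${\cal F}_2\prec{\cal F}_1$ means exactly that there is a surjective submersion $p$ with $p\circ\pi_2=\pi_1$; cancelling $\Phi$ on the right (it is surjective) and setting $q=E^{-1}\circ p\circ E$, this becomes equivalent to the existence of a submersion $q$ with $q\circ L_2=L_1$, and hence, by the lemma, to $\ker L_2\subseteq\ker L_1$, i.e.\ --- taking orthogonal complements --- to $V_1\subseteq V_2$, where $V_j$ is the subspace attached to ${\cal F}_j$ (namely $\im B$, $\ker C_1$ or $\ker C_2$). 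Specialising the pair $(V_1,V_2)$ to $(\im B,\ker C_i)$, then to $(\ker C_i,\im B)$, then to $(\ker C_1,\ker C_2)$, yields statements~1, 2 and~3 respectively.

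The one point deserving a second thought is why the \emph{a priori} nonlinear freedom in the submersion $p$ of Definition~\ref{defsub} cannot enlarge the class of admissible pairs of foliations --- that is, why no ``nonlinear'' $p$ succeeds when no linear one does. This is exactly the content of the ``only if'' half of the lemma, where $\ker L_2\subseteq\ker L_1$ is extracted without assuming $q$ linear. Everything else --- cancelling diffeomorphisms, passing to orthogonal complements, and noting that the strict dimension inequalities built into Definition~\ref{defsub} correspond to the inclusions being proper --- is routine bookkeeping, so I do not expect a serious obstacle.
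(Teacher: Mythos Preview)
Your proposal is correct and follows essentially the same route as the paper. Both arguments push everything through the logarithm $\Phi$ to reduce to linear surjections with kernels $(\im B)^\perp$ and $(\ker C_i)^\perp$; the paper then argues the ``only if'' direction via leaf containment (the Remark after Definition~\ref{defsub}) and the ``if'' direction by extending a basis of $\im B$ to one of $\ker C_i$ and projecting, whereas you package both directions into a single factorisation lemma for linear surjections --- but once one unwinds ``leaf containment'' as ``kernel containment'' and ``basis extension then projection'' as ``linear factorisation'', the two proofs coincide. Your explicit remark that the ``only if'' half uses nothing about $q$ beyond the identity $q\circ L'=L$ is a nice touch, making transparent why the nonlinear freedom in Definition~\ref{defsub} is harmless.
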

\begin{proof} We only prove the first statement since the proofs of the remaining statements are analogous.
 
The submersions $\widehat\pi $ and $\widetilde \pi_{i}$ whose fibres define (respectively) the foliations ${\cal F}^\omega$ and  ${\cal F}^{P_i}$ were obtained in the proofs of propositions~\ref{prop1} and \ref{prop2} by means of the diffeomorphism $\Phi$ given by \eqref{phi}. 
Recall from those proofs that the leaf $N$ of ${\cal F}^\omega$ through $\mathbf{x}_0\in\Rb_+^N$ is $\Phi$-diffeomorphic to the affine subspace of $\Rb^N$
\[N' = \Phi({\bf x}_0) + \left ( \im B\right )^\perp\]
whereas the  leaf $S^i$ of ${\cal F}^{P_i}$ through the same point ${\bf x}_0$ is $\Phi$-diffeomorphic to
\[{S^i}'= \Phi({\bf x}_0) + \left ( \ker C_i\right )^\perp.\]
If  ${\cal F}^{P_i}\prec{\cal F}^\omega$ then $S^i\subset N$ which is equivalent to  ${S^i}'\subset N'$ and consequently $\im B \subset \ker C_i$. 

Conversely, if $\im B \subset \ker C_i$ we can choose the submersions $\widehat\pi :\Rb_+^N\rightarrow \Rb_+^{2k}$ from \eqref{pi} and $\widetilde \pi_{i}:\Rb_+^N\rightarrow \Rb_+^{s_i}$ from \eqref{pis} in such a way that 
\[
\widetilde \pi_{i} ({\bf x}) = \left ( \widehat\pi ({\bf x}), z_{2k+1}(\mathbf{x}), \ldots, z_{s_i}(\mathbf{x})\right ).
\]
Thus it is clear that $\widehat\pi  = p_i\circ\widetilde \pi_{i}$, where $p_i: \Rb_+^{s_i}\rightarrow \Rb_+^{2k}$ is the projection onto the first $2k$ coordinates, showing that ${\cal F}^{P_i}\prec{\cal F}^\omega$. 
\end{proof}

As an immediate consequence of the last proposition and of the characterization of the null foliation ${\cal F}^\omega$ given in Proposition~\ref{prop1} we have the following corollary.
\begin{corollary}
Under the conditions of Proposition~\ref{propnest}, if ${\cal F}^{P}\prec {\cal F}^\omega$  then $\omega\restr{S}\equiv 0$ for any symplectic  leaf $S$ of ${\cal F}^{P}$, that is, each symplectic leaf $S$ of $P$ is \emph{isotropic} with respect to $\omega$.
\end{corollary}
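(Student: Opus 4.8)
The plan is to combine the subfoliation hypothesis with the structural description of the null foliation already obtained in Proposition~\ref{prop1}. Since ${\cal F}^P\prec{\cal F}^\omega$, by the Remark following Definition~\ref{defsub} every symplectic leaf $S$ of $P$ is contained in some null leaf $N_\alpha$ of ${\cal F}^\omega$. Equivalently, by Proposition~\ref{propnest}(1), this is the situation $\im B\subset\ker C$, which is exactly what makes the containment $S\subset N_\alpha$ hold at the level of the affine models under $\Phi$.

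First I would fix a symplectic leaf $S$ and a null leaf $N_\alpha$ with $S\subset N_\alpha$, and recall from Proposition~\ref{prop1} that $\omega$ restricted to $N_\alpha$ vanishes identically, i.e. $\omega\restr{N_\alpha}=0$. Then the restriction of $\omega$ to $S$ is obtained by pulling back $\omega\restr{N_\alpha}$ along the inclusion $S\hookrightarrow N_\alpha$; since a pullback of the zero form is the zero form, $\omega\restr{S}\equiv 0$. More concretely, at a point $\mathbf{x}\in S$ one has $T_\mathbf{x}S\subset T_\mathbf{x}N_\alpha$, and $\omega_\mathbf{x}$ annihilates $T_\mathbf{x}N_\alpha$ by Proposition~\ref{prop1}, hence a fortiori it annihilates the subspace $T_\mathbf{x}S$. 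Because $S$ is even-dimensional and $\omega\restr{S}$ vanishes, $S$ is in particular isotropic with respect to $\omega$ (indeed it is a null submanifold for $\omega$, which is the strongest possible statement).

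There is essentially no obstacle here: the only mild point to make explicit is why $S\subset N_\alpha$ for a single $\alpha$ — this follows because the $N_\alpha$ partition $\Rb^N_+$ and $S$, being connected (Proposition~\ref{prop2}), cannot meet two distinct leaves; alternatively one invokes Definition~\ref{defsub} directly, writing $\widehat\pi=p\circ\widetilde\pi$, so that $S=\widetilde\pi^{-1}(\beta)\subset\widehat\pi^{-1}(p(\beta))=N_{p(\beta)}$. After that, the statement is an immediate consequence of functoriality of pullback of differential forms, so the corollary is genuinely a corollary and needs no new computation.
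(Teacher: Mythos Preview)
Your proposal is correct and follows exactly the approach the paper indicates: the corollary is stated as an immediate consequence of Proposition~\ref{propnest} (giving $S\subset N_\alpha$) together with Proposition~\ref{prop1} (giving $\omega\restr{N_\alpha}=0$), and you have spelled out precisely that argument, including the small justification of why $S$ lies in a single null leaf.
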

We conclude this section by remarking that the condition $\im B\subset \ker C$ in the first statement of Proposition~\ref{propnest} is equivalent to $CB=[ 0 ]$ and imposes several restrictions on the matrix $C$. For instance, just by dimension counting, it is clear that if $B$ has rank equal to $N-1$ then the only possible matrix $C$ satisfying this condition is the zero matrix. 

On the other hand, for some matrices $B$, like the one in the following  example, there exist nontrivial matrices $C_1$ and $C_2$ with different ranks, satisfying $C_iB=[ 0 ]$. This case will be explored in the last section, where two distinct Poisson structures are used to gain more insight into the dynamics of a cluster map. 

Finally we note that the condition $CB=[ 0 ]$ appears in the context of cluster algebra theory (\cite{GeShVa} and \cite{InNa}) to characterize a Poisson structure (determined by $C$) as being \emph{compatible} with the cluster algebra ${\cal A}(B)$ when $B$ is singular. For our purpose of reducing a cluster map via a Poisson structure, there is no reason \emph{a priori} to consider only Poisson structures which are compatible with ${\cal A}(B)$. However, the proposition above together with the results in \cite{InNa} show that the condition of compatibility with ${\cal A}(B)$ appears naturally when the symplectic foliation of the Poisson structure is a subfoliation of ${\cal F}^\omega$.

\begin{example} \label{ex3} 

Consider the following skew-symmetric matrix $B$ of rank 2, which defines a 1-periodic quiver (see \cite{FoMa} for the classification of these quivers):
 \small\[B=\begin{bmatrix}
0&1&0&-1&-1&0&1\\
-1&0&1&1&0&-1&0\\
0&-1&0&1&1&0&-1\\
1&-1&-1&0&1&1&-1\\
1&0&-1&-1&0&1&0\\
0&1&0&-1&-1&0&1\\
-1&0&1&1&0&-1&0
\end{bmatrix}.
\]
\normalsize
The following skew-symmetric matrices $C_1$ and $C_2$ satisfy the condition $C_iB=[0]$ (only the upper triangular part of the matrices is shown):
\small
\begin{equation}\label{C7nos}
C_1=\begin{bmatrix}
0&1&1&2&3&3&4\\
&0&1&1&2&3&3\\
&&0&1&1&2&3\\
&&&0&1&1&2\\
&&&&0&1&1\\
&&&&&0&1\\
&&&&&&0\\
\end{bmatrix},
\quad C_2=\begin{bmatrix}
0&1&-1&0&1&-1&0\\
&0&1&-1&0&1&-1\\
&&0&1&-1&0&1\\
&&&0&1&-1&0\\
&&&&0&1&-1\\
&&&&&0&1\\
&&&&&&0\\
\end{bmatrix}.\end{equation}
\normalsize
Moreover, it can be checked that $\rk C_1=4$, $\rk C_2=2$ and $\ker C_1\subset \ker C_2$. By Proposition~\ref{propnest} we have 
\[{\cal F}^{P_2}\prec {\cal F}^{P_1} \prec {\cal F}^\omega,\]
where the symplectic leaves of ${\cal F}^{P_2}$ are 2-dimensional, the symplectic leaves of ${\cal F}^{P_1}$ are 4-dimensional  and the null leaves of ${\cal F}^\omega$ are 5-dimensional. 

Submersions $\widetilde{\pi}_2, \widetilde{\pi}_1$  and $\widehat{\pi}$ defining  the foliations ${\cal F}^{P_2}, {\cal F}^{P_1}$ and  ${\cal F}^{\omega}$, respectively, are given by:
\[
\widetilde\pi_{2}({\bf x})=(y_1,y_2,y_3, y_4,y_5), \quad \widetilde\pi_{1}({\bf x})=(y_1,y_2,y_3), \quad \widehat\pi ({\bf x})=(y_1,y_2),
\]
with ${\bf x}= (x_1,x_2,\ldots ,x_7)$ and 
\[y_1=\frac{x_1x_6}{x_3x_4},\,\,y_2=\frac{x_2x_7}{x_4x_5}, \,\,y_3=\frac{x_1x_7}{x_4^2}, \,\, y_4= x_1x_2x_3, \,\,  y_5=x_2x_3x_4.
\]
\end{example}

\section{Multiple reductions and the dynamics of cluster maps}\label{din}

In this section we will focus on the consequences of reduction, via presymplectic or via Poisson structures, to the dynamics of a cluster map $\varphi$. The next proposition describes the dynamical behaviour of a map on the leaves of the foliation associated   to a reduction of it.

Recall that a \emph{first integral} of a map $f$ is a non constant real-valued function $I$ such that $I\circ f=I$. Also, a map $f$ is said to be \emph{globally $p$-periodic} if its   $p${th}-iterate is the identity, that is,  $f^{(p)}=Id$.

\begin{proposition}
\label{reduc}
Let ${\cal F}$ be a strictly simple foliation of $M$ given by  $\pi:M\rightarrow N$ and suppose that $(g, \pi)$ is a reduced system of $f: M \rightarrow M$. Then 
\begin{enumerate}
\item the map $f$ sends leaves of ${\cal F}$ to leaves of ${\cal F}$, more precisely
\[f (L_\alpha) \subset L_{g(\alpha)};\]
\item the leaf $L_\alpha$ of ${\cal F}$ is invariant under the map $f^{(p)}$ if and only if $\alpha$ is a $p$-periodic point of $g$, that is, $g^{(p)}(\alpha) = \alpha$;
\item $g$ is a globally $p$-periodic map   if and only if $\pi$ is a vector-valued first integral of $f^{(p)}$, that is, $\pi\circ f^{(p)}=\pi$. 
\end{enumerate}
\end{proposition}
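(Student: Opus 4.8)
The plan is to prove the three statements in order, using only the defining relation $\pi\circ f=g\circ\pi$ of a reduced system together with the fact that, since $\cal F$ is strictly simple, each leaf is exactly a fibre $L_\alpha=\pi^{-1}(\alpha)$ and $\pi$ is a surjective submersion.

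First I would establish statement (1). Take any point $x\in L_\alpha$, so $\pi(x)=\alpha$. Applying the intertwining relation gives $\pi(f(x))=g(\pi(x))=g(\alpha)$, which says precisely that $f(x)\in\pi^{-1}(g(\alpha))=L_{g(\alpha)}$. Since $x$ was arbitrary, $f(L_\alpha)\subset L_{g(\alpha)}$. The same computation iterated (or, formally, induction on $p$ using $\pi\circ f^{(p)}=g^{(p)}\circ\pi$, which follows at once from $\pi\circ f=g\circ\pi$) yields $f^{(p)}(L_\alpha)\subset L_{g^{(p)}(\alpha)}$, a relation I will want for the next two parts.

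For statement (2): if $g^{(p)}(\alpha)=\alpha$, then by the iterated inclusion just noted $f^{(p)}(L_\alpha)\subset L_{g^{(p)}(\alpha)}=L_\alpha$, so $L_\alpha$ is invariant. Conversely, if $f^{(p)}(L_\alpha)\subset L_\alpha$, pick any $x\in L_\alpha$ (the leaf is nonempty by surjectivity of $\pi$); then $f^{(p)}(x)\in L_\alpha$ means $\pi(f^{(p)}(x))=\alpha$, but the left side equals $g^{(p)}(\pi(x))=g^{(p)}(\alpha)$, hence $g^{(p)}(\alpha)=\alpha$. For statement (3): $g$ is globally $p$-periodic iff $g^{(p)}=\mathrm{Id}_N$ iff $g^{(p)}(\pi(x))=\pi(x)$ for every $x\in M$ (using surjectivity of $\pi$ to pass between "for all $\alpha\in N$" and "for all $x\in M$"), and $g^{(p)}(\pi(x))=\pi(f^{(p)}(x))$ by the intertwining relation, so this is equivalent to $\pi\circ f^{(p)}=\pi$.

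None of the steps presents a genuine obstacle; the only points requiring a little care are the two places where I pass between a statement about all leaves (indexed by $\alpha\in N$) and a statement about all points of $M$ — this is exactly where surjectivity of $\pi$ is used, and where the \emph{strict} simplicity (each leaf is a full fibre, not merely contained in one) matters for the "only if" directions of (2) and (3). I would also note in passing that in statement (3) the phrasing "$\pi$ is a vector-valued first integral" should be read componentwise: each coordinate function of $\pi$ is a first integral of $f^{(p)}$ in the sense recalled before the proposition (non-constancy being automatic since $\pi$ is a submersion onto a positive-dimensional space).
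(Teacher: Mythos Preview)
Your proof is correct and follows essentially the same route as the paper's: both derive everything from the identity $\pi\circ f = g\circ\pi$ (and its iterate $\pi\circ f^{(p)} = g^{(p)}\circ\pi$) together with the description of leaves as fibres $L_\alpha = \pi^{-1}(\alpha)$. Your version is more explicit in spelling out the converse directions of (2) and (3) and in flagging where surjectivity of $\pi$ is used, whereas the paper simply asserts that these follow directly from the iterated identity.
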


\begin{proof}
All the statements follow from the  definition of a reduced system, in particular from the identity 
\begin{equation}
\label{fgpi} 
\pi \circ f = g \circ \pi,
\end{equation} 
and from the fact that any leaf $L_\alpha$ of the foliation is given by
\[L_\alpha = \{ {\bf x} \in M: \pi ({\bf x}) = \alpha\}.\]
For the first statement assume that ${\bf x}$ belongs to $L_\alpha$. Then \eqref{fgpi} shows that $\pi (f({\bf x})) = g (\alpha)$, that is, $f({\bf x})$ belongs to $L_{g(\alpha)}$. 

Concerning the second and third statements, note that \eqref{fgpi} implies 
\[\pi \circ f^{(n)} = g^{(n)} \circ \pi, \quad n=1,2, \ldots\]
and the results follow directly from this identity. 
\end{proof}

\begin{proposition} 
\label{redprespois}
Let $(\widehat{\varphi}, \widehat{\pi})$ and $(\widetilde{\varphi}, \widetilde{\pi})$ be reduced systems of the cluster map $\varphi$, arising from reduction  via the presymplectic form $\omega$ in \eqref{omega}   and via a $\varphi$-invariant Poisson structure $P$, respectively.  Let   ${\cal F}^\omega$ be the null foliation defined by $\widehat{\pi}$ and ${\cal F}^P$ the symplectic foliation defined by $\widetilde{\pi}$. Denote the null leaves by $N_\alpha$ and  the symplectic leaves by  $S_\beta$. Then,
\begin{enumerate}
\item the cluster map $\varphi$ sends each leaf $N_\alpha$ (resp. $S_\beta$) of  ${\cal F}^\omega$ (resp.  of ${\cal F}^P$) to the leaf $N_{\widehat{\varphi}(\alpha)}$ (resp. $S_{\widetilde{\varphi}(\beta)}$) of the same foliation;
\item a leaf $N_\alpha$ (resp. $S_\beta$) is invariant under $\varphi^{(p)}$ if and only if $\alpha$ (resp. $\beta$) is a $p$-periodic point of $\widehat{\varphi}$ (resp. $\widetilde{\varphi}$);
\item if $\beta$ is a $p$-periodic point of $\widetilde{\varphi}$, the restriction of $\varphi^{(p)}$ to $S_\beta$ is a symplectic map.
\end{enumerate}
\end{proposition}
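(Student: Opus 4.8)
Statements~1 and~2 are nothing but Proposition~\ref{reduc} applied to $f=\varphi$: the foliations $\mathcal{F}^\omega$ and $\mathcal{F}^P$ are strictly simple by Propositions~\ref{prop1} and~\ref{prop2}, and $(\widehat\varphi,\widehat\pi)$, $(\widetilde\varphi,\widetilde\pi)$ are reduced systems of $\varphi$, so parts~1 and~2 of that proposition give exactly these assertions. Hence the only new content is statement~3. The plan for it is to observe that $\varphi$, and therefore every iterate $\varphi^{(p)}$, is a \emph{Poisson diffeomorphism} of $(\Rb_+^N,P)$, and then to invoke the general principle that a Poisson diffeomorphism which maps a symplectic leaf onto itself restricts to a symplectomorphism of that leaf.

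To set things up I would first record two facts. Since $\varphi=\sigma^m\circ\mu_m\circ\cdots\circ\mu_1$ and each mutation is a smooth involution of $\Rb_+^N$ while $\sigma$ permutes coordinates, $\varphi$ is a diffeomorphism of $\Rb_+^N$; hence $\varphi^{(p)}$ is one too, and $\varphi_*P=P$ gives $(\varphi^{(p)})_*P=P$, so $\varphi^{(p)}$ is a Poisson diffeomorphism. Secondly, by Proposition~\ref{prop2} the restriction of $P$ to $\Rb_+^N$ is regular of corank $s$, so each symplectic leaf $S_\beta$ carries its canonical leafwise symplectic form $\omega_\beta$, determined by $\omega_\beta(P^{\sharp}df,P^{\sharp}dg)=\{f,g\}$ along $S_\beta$; equivalently, transporting by the diffeomorphism $\Phi$ of \eqref{phi}, the constant Poisson structure $P'=\Phi_*P$ has affine symplectic leaves $\mathbf{v}_0+(\ker C)^{\perp}$ carrying the constant symplectic form obtained by inverting $C$ on $(\ker C)^{\perp}$ (nondegenerate because $(\ker C)^{\perp}=\im C$ for skew-symmetric $C$), and $\omega_\beta$ is its pull-back by $\Phi^{-1}$.

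The core step is to check that for every $\gamma$ the restriction $\varphi|_{S_\gamma}\colon S_\gamma\to S_{\widetilde\varphi(\gamma)}$ --- a diffeomorphism between leaves by statement~1 --- satisfies $(\varphi|_{S_\gamma})^{*}\omega_{\widetilde\varphi(\gamma)}=\omega_\gamma$. This is a pointwise verification from the Poisson-map identity $P^{\sharp}_{\varphi(x)}=d_x\varphi\circ P^{\sharp}_{x}\circ (d_x\varphi)^{*}$: it shows first that $d_x\varphi$ carries $T_xS_\gamma=\im P^{\sharp}_x$ isomorphically onto $T_{\varphi(x)}S_{\widetilde\varphi(\gamma)}$, and then that the defining relations for $\omega_\gamma$ and $\omega_{\widetilde\varphi(\gamma)}$ transform into one another; in the $\Phi$-conjugated picture it reduces to the trivial remark that an affine isomorphism between two affine symplectic leaves of a constant structure intertwines the constant forms. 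Iterating $p$ times gives $(\varphi^{(p)}|_{S_\gamma})^{*}\omega_{\widetilde\varphi^{(p)}(\gamma)}=\omega_\gamma$; taking $\gamma=\beta$ and using $\widetilde\varphi^{(p)}(\beta)=\beta$, we conclude that $\varphi^{(p)}|_{S_\beta}$ is a diffeomorphism of $S_\beta$ with $(\varphi^{(p)}|_{S_\beta})^{*}\omega_\beta=\omega_\beta$, i.e. a symplectic map.

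The only step that is not routine bookkeeping is the intertwining of the leafwise symplectic forms by a Poisson diffeomorphism; the difficulty is confined to keeping the anchor and transpose conventions straight, and it disappears in the constant-coefficient picture obtained by conjugating with $\Phi$, which is the form in which I would present the final argument.
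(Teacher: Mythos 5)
Your proposal is correct and follows essentially the same route as the paper: parts 1 and 2 are delegated to Proposition~\ref{reduc}, and part 3 is obtained from the classical fact that a Poisson map ($\varphi^{(p)}$, since $\varphi_*P=P$) preserving a symplectic leaf restricts to a map preserving the induced leafwise symplectic form. The paper phrases this via the inclusion $i:S_\beta\hookrightarrow\Rb_+^N$ being a Poisson map and the nondegeneracy of the induced structure on $S_\beta$, while you verify the intertwining of the leaf forms directly (and in the $\Phi$-conjugated constant-coefficient picture), but this is only a difference in level of detail, not of method.
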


\begin{proof}
The first two statements are  just a rephrasing of items {\it 1} and {\it 2} in Proposition~\ref{reduc}. 
The third statement is a more relevant property of reduction via Poisson structures. Its proof follows from classical theory on Poisson and symplectic manifolds (see for example \cite{LiMa} and \cite{AWein}) as we now describe. 

The symplectic structure on $S_\beta$ is given by the nondegenerate Poisson structure induced by the Poisson structure $P$ on $\Rb_+^N$. This means that the inclusion $i:S_\beta \hookrightarrow \Rb_+^N$ is a Poisson map. 

The fact that $\varphi$ is a Poisson map implies that $\varphi^{(p)}$ is also a Poisson map. The composition $\varphi^{(p)}\circ i$, which is precisely the restriction $\varphi^{(p)}\restr{S_\beta}$, is therefore a Poisson map. Since the Poisson structure on $S_\beta$ is nondegenerate, then any map preserving this Poisson structure must preserve the associated symplectic structure. In other words $\varphi^{(p)}\restr{S_\beta}$ is symplectic.
\end{proof}

We end this section by explaining  how multiple reductions of the same cluster map  may simplify the study of the dynamics of the unreduced map. 
For instance, suppose we have two reduced systems $(g_1, \pi_1)$ and $(g_2, \pi_2)$ of the same map $f:M\rightarrow M$, such that the associated foliations  ${\cal F}_1$ and ${\cal F}_2$ are strictly simple foliations and satisfy ${\cal F}_2\prec {\cal F}_1$.  In this case, the study of the  dynamics of $f$ is effectively simplified,  in the sense that we can study (sequentially)  the dynamics of the reduced map $g_2$ by using the map $g_1$ and then study the dynamics of $f$ using the map $g_2$. 
This procedure is made precise in the next proposition and the examples of the following section illustrate it.

\begin{proposition}
\label{redpi}
Let ${\cal F}_1$ and ${\cal F}_2$ be strictly simple foliations of $M$  given by the surjective submersions $\pi_1: M\rightarrow N_1$ and $\pi_2: M\rightarrow N_2$, respectively. Suppose that ${\cal F}_2\prec {\cal F}_1$ and let $p: N_2\rightarrow N_1$ be a submersion in the conditions of Definition~\ref{defsub}. 

If $(g_1, \pi_1)$ and $(g_2, \pi_2)$ are reduced systems of $f:M\rightarrow M$, then $(g_1, p)$ is a reduced system of $g_2$. 
\end{proposition}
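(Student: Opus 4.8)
The plan is to verify the single defining identity of a reduced system for the pair $(g_1,p)$, namely $p\circ g_2 = g_1\circ p$, using the three relations already available: $\pi_1\circ f = g_1\circ\pi_1$, $\pi_2\circ f = g_2\circ\pi_2$, and $p\circ\pi_2 = \pi_1$. The first observation is that $p$ is a surjective submersion onto $N_1$ by hypothesis (it is the map furnished by Definition~\ref{defsub}), so the only thing to check is the intertwining relation.

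First I would compute $p\circ g_2\circ\pi_2$. By the reduction relation for $g_2$ this equals $p\circ\pi_2\circ f$, and by ${\cal F}_2\prec{\cal F}_1$ this in turn equals $\pi_1\circ f$, which by the reduction relation for $g_1$ equals $g_1\circ\pi_1 = g_1\circ p\circ\pi_2$. Hence $(p\circ g_2)\circ\pi_2 = (g_1\circ p)\circ\pi_2$. Since $\pi_2:M\rightarrow N_2$ is a surjective submersion, it is in particular surjective, so two maps out of $N_2$ that agree after precomposition with $\pi_2$ must be equal; therefore $p\circ g_2 = g_1\circ p$. Together with the fact that $p$ is a surjective submersion, this is exactly the statement that $(g_1,p)$ is a reduced system of $g_2$.

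The argument is almost entirely formal diagram-chasing, so I do not anticipate a genuine obstacle; the only point requiring a word of care is the cancellation of $\pi_2$ on the right, which is legitimate precisely because $\pi_2$ is surjective (a property guaranteed by it being a \emph{surjective} submersion in the hypotheses, and inherited from the strictly simple foliation structure). One could optionally remark that the resulting commutative square fits into the natural three-level diagram $M\to N_2\to N_1$ with the dynamics $f$, $g_2$, $g_1$ acting compatibly on the three levels, which is the picture one wants for the sequential study of the dynamics described before the proposition.
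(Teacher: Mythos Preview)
Your proof is correct and follows essentially the same approach as the paper: both arguments derive the identity $p\circ g_2\circ\pi_2 = g_1\circ p\circ\pi_2$ from the three given relations and then cancel $\pi_2$ by surjectivity. The only difference is the order in which the substitutions are made (the paper starts from $\pi_1\circ f = g_1\circ\pi_1$ and substitutes $\pi_1 = p\circ\pi_2$, whereas you start from $p\circ g_2\circ\pi_2$ and work forward), which is purely cosmetic.
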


\begin{proof}
Because $(g_1, \pi_1)$ is a reduced system of $f$, we have $\pi_1\circ f=g_1\circ \pi_1$. As ${\cal F}_2\prec {\cal F}_1$, we also have $\pi_1=p\circ\pi_2$ which leads to
\[p\circ \pi_2\circ f=g_1\circ p\circ \pi_2.\]
As $(g_2, \pi_2)$ is also a reduced system of $f$, we obtain
\[p\circ g_2 \circ \pi_2=g_1\circ p\circ \pi_2\]
and the conclusion follows from surjectivity of $\pi_2$. 
\end{proof}

The procedure described in the last proposition of assigning  a third reduced system to two existing ones can be used with great advantage to study the original map $f$ when we have  several foliations (associated to reduced systems) verifying 
\[
{\cal F}_m\prec  \cdots \prec {\cal F}_2\prec{\cal F}_1.\]
Such a set of simple foliations will be called a \emph{flag of simple foliations}. 
 An example of this type, where we have two symplectic foliations and a null foliation satisfying
${\cal F}^{P_2}\prec {\cal F}^{P_1} \prec{\cal F}^\omega,$
will be comprehensively treated in the next section. 

\section{Examples}

In this section we apply the results obtained in the previous sections to draw conclusions on the dynamics of two cluster maps. 
Our first example is the Somos-5 cluster map which illustrates the simplest situation of applicability  of our results: existence of a flag ${\cal F}^{P}\prec {\cal F}^{\omega}$ and of a fixed point of the reduced map $\widehat{\varphi}$.
The second example is a cluster map in dimension 7, also associated to a 1-periodic quiver,  which falls into a more specific case: existence of a flag ${\cal F}^{P_2}\prec {\cal F}^{P_1} \prec {\cal F}^\omega$ and global periodicity of the reduced map $\widehat{\varphi}$. 

Although the examples chosen are maps associated to 1-periodic quivers, the results proved in the previous sections apply to higher periodic quivers.

\begin{example} {\bf Somos-5}

It is well known that the Somos-5 recurrence
\begin{equation}
\label{somos5rep}
x_{n+5}x_n= x_{n+1} x_{n+4}+x_{n+2}x_{n+3}
\end{equation}
is a  recurrence arising from the mutation-periodic quiver $Q_B$ with $B$ as in Example~\ref{ex1} with $r=s=1$. 
 The associated cluster map can be reduced using the presymplectic form \eqref{omega} and the reduced map $\widehat{\varphi}$ is a QRT map of the plane \cite{QRT} which is known to be integrable in the Liouville sense (see for instance \cite{Duist} and \cite{QRT2} for the integrability of QRT maps). In fact, a first integral of this QRT map defines an elliptic fibration  of the plane whose generic fibres are curves of genus 1. The general solution of the Somos-5 recurrence corresponding to elliptic curves of genus 1 was obtained in \cite{Ho} in  terms of sigma functions. Using the approach developed  in the previous sections  we obtain (Proposition~\ref{propS} below) the general solution of the Somos-5 recurrence corresponding to the singular case (genus zero curves).
 
The cluster map associated to the Somos-5 recurrence is 
\begin{equation}\label{somos511}
\varphi(x_1, \ldots, x_5)= \left(x_2,x_3,x_4,x_5, \frac{x_2 x_5+x_3x_4}{x_1}\right).
\end{equation}
The Poisson structure $P$ on $\Rb_+^5$ given by the skew-symmetric matrix $C=[c_ij]$ with $c_ij=j-i$,
is known to be invariant under $\varphi$. This matrix $C$ has a $3$-dimensional kernel and  $\im B\subset \ker C$.  By Proposition~\ref{propnest}, ${\cal F}^P$ is a simple subfoliation of  ${\cal F}^\omega$. 
 Submersions defining these foliations produce the reduced systems $(\widetilde{\varphi}, \widetilde{\pi})$ and $(\widehat{\varphi}, \widehat{\pi})$, where 
\begin{align*}
\widetilde{\varphi }:  {\bf x} \longmapsto \left(y_2, \frac{1+y_2}{y_1 y_2}, \frac{1+y_2}{y_1 y_2 y_3}\right), & \qquad \widetilde\pi :  {\bf x} \longmapsto (y_1,y_2, y_3) \\
\widehat{\varphi} :  {\bf x} \longmapsto \left(y_2, \frac{1+y_2}{y_1 y_2}\right), & \qquad \widehat\pi :  {\bf x} \longmapsto (y_1,y_2)
\end{align*}
with $\mathbf{x}=(x_1, \ldots,  x_5)\in\Rb_+^5$ and
\begin{equation}
\label{y1y2}
y_1=\frac{x_1 x_4}{x_2 x_3},\quad  y_2=\frac{x_2 x_5}{x_3 x_4}, \quad y_3=\frac{x_3 x_5}{x_4^2}.
\end{equation}
For more details on these reductions see  \cite{FoHo1} and \cite{InEs2}.

By Proposition~\ref{redpi}, $(\widehat{\varphi}, p)$ is a reduced system of $\widetilde{\varphi}$ with 
$p (y_1,y_2,y_3)=(y_1,y_2)$. Schematically: 

\[\xymatrix{ \Rb_+ ^5  \ar@/^2pc/[rr]^{\widehat\pi } \ar @{->} [r]^{\hspace{-0.1cm}\widetilde \pi}\ar @{->} [d]_{\varphi} & \Rb^3_+ \ar @{->}[d]^{\widetilde\varphi}\ar @{->}[r]^{\hspace{0.0cm}p} &\Rb_+ ^2\ar @{->}[d]^{\widehat\varphi}\\
 \Rb_+ ^{5}   \ar@/_2pc/[rr]_{\widehat\pi }  \ar @{->} [r]_{\hspace{-0.1cm}\widetilde \pi} & \Rb^3 \ar @{->}[r]_{\hspace{0.0cm}p}&\Rb_+ ^2
 }\]

It is easy to see that  the map $\widehat\varphi$ has a unique fixed   point, the point $(r,r)$  
with $r$ the real (positive) root of $x^3=1+x$, and  no points of minimal period 2. A direct application of Proposition~\ref{reduc} to the reduced system  $(\widehat\varphi,p)$ of $\widetilde\varphi$ shows that the leaf
$L= \{ {\bf y} \in \Rb_+^3: y_1=r, y_2=r\}$
is invariant under $\widetilde\varphi$. Let $h$ denote the restriction  of $\widetilde\varphi$ to $L$. Using the natural $y_3$ coordinate on $L$,  the expression of $h$ is
\[h(y_3)=\frac{r}{y_3},\]
which is a globally 2-periodic map with a unique fixed point: $y_3=\sqrt{r}$. 

Summing up, the point $(r,r,\sqrt{r})$ is the unique fixed point of the map $\widetilde\varphi$ and its 2-periodic points are precisely the points on $\Rb_+^3$ of the form $(r,r,\lambda)$. Applying  Proposition~\ref{redprespois} to the reduced system $(\widetilde\varphi,\widetilde\pi)$ of $\varphi$,  the following conclusions can be withdrawn for the symplectic leaves of ${\cal F}^P$:
\begin{itemize}
\item
$S_{(r,r,\sqrt{r})} = \{ {\bf x}\in \Rb_+^5: x_1x_4=r x_2 x_3, \, x_2x_5=r x_3 x_4, x_3x_5=\sqrt{r} x_4^2\}$
is invariant under $\varphi$ and the restriction of $\varphi$ to $S_{(r,r,\sqrt{r})}$ is symplectic; 
\item
$S_{(r,r,\lambda)} = \{{\bf x}\in \Rb_+^5: x_1x_4=r x_2 x_3, \, x_2x_5=r x_3 x_4,\, x_3x_5=\lambda x_4^2\}$,  for $\lambda\in\Rb_+$, 
is invariant under $\varphi^{(2)}$ and the restriction of $\varphi^{(2)}$ to $S_{(r,r,\lambda)}$ is symplectic.
\end{itemize}

Choosing natural coordinates $(x_3,x_4)$ on each symplectic leaf, the computation of the restricted maps 
$h_1=\varphi\restr{S_{(r,r,\sqrt{r})}}$ and $h_2=\varphi^{(2)}\restr{S_{(r,r,\lambda)}}$ gives
\[h_1(x_3,x_4)=\left (x_4,\sqrt{r} \frac{x_4^2}{x_3}\right ) \quad \mbox{and} \quad h_2(x_3,x_4)= \lambda \left ( \frac{x_4^2}{x_3},\frac{r \, x_4^3}{x_3^2}\right ).\]

The expression of the iterates of $h_1$ and $h_2$ can be obtained directly  from Lemma 1 of \cite{InHeEs}. These are given, for $n\geq 1$, by
\[h_1^{(n)} =r^{n(n-1)/4}\left (  \frac{x_4^n}{x_3^{n-1}}, r^{n/2}\frac{x_4^{n+1}}{x_3^n}\right ),\quad h_2^{(n)} =\lambda^n r^{n(n-1)}\left (  \frac{x_4^{2n}}{x_3^{2n-1}}, r^{n}\frac{x_4^{2n+1}}{x_3^{2n}}\right )\]
Hence, we are led to the following proposition.
\begin{proposition}
\label{propS}
Consider the Somos-5 recurrence in \eqref{somos5rep}
and let $r$ be the real root of $x^3=1+x$.  Let $(x_1,\ldots,x_5)$ be initial data satisfying the identities
\[x_1x_4=r x_2 x_3, \,\,\, x_2x_5=r x_3 x_4,\,\, x_3x_5= \lambda x_4^2\]
with $\lambda\in\Rb_+$. Then,
\begin{enumerate}
\item if $\lambda=\sqrt{r}$, the  corresponding solution of the Somos-5 recurrence is given by
\[ x_{n+5}=r^{(n+2)(n+1)/4}\,\,\frac{x_4^{n+2}}{x_3^{n+1}}, \quad n\geq 1;\]
\item if $\lambda\neq\sqrt{r}$,
 the   corresponding  solution of the Somos-5 recurrence is given by
\[x_{2n+4}=\lambda^{n}r^{n^2}\,\,\frac{x_4^{2n+1}}{x_3^{2n}} , \quad x_{2n+5}=\lambda^{n+1} r^{n(n+1)}\,\, \frac{x_4^{2n+2}}{x_3^{2n+1}}, \quad n\geq 1.\]
\end{enumerate}
\end{proposition}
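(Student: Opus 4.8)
The plan is to assemble Proposition~\ref{propS} from pieces that have already been put in place: the flag ${\cal F}^P\prec{\cal F}^\omega$ for Somos-5, the description of the reduced systems $(\widetilde\varphi,\widetilde\pi)$ and $(\widehat\varphi,\widehat\pi)$, the fixed/periodic point analysis of $\widehat\varphi$ and $h$, and the explicit iterates of $h_1$ and $h_2$. The hypothesis on the initial data $(x_1,\ldots,x_5)$ is precisely the statement that the corresponding point ${\bf x}\in\Rb_+^5$ lies on the symplectic leaf $S_{(r,r,\lambda)}$, since by \eqref{y1y2} the three identities read $y_1({\bf x})=r$, $y_2({\bf x})=r$, $y_3({\bf x})=\lambda$. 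So the first step is to observe that the solution of the recurrence with this initial data is the orbit $\{\varphi^{(n)}({\bf x})\}_{n\geq 0}$, and that this orbit stays on $S_{(r,r,\lambda)}$: when $\lambda=\sqrt r$ this leaf is $\varphi$-invariant, and for general $\lambda$ it is $\varphi^{(2)}$-invariant while $\varphi$ maps $S_{(r,r,\lambda)}$ to the leaf with $y_3=r/\lambda$ (as one sees from the expression of $\widetilde\varphi$, or simply from the fact that $(r,r,\lambda)$ and $(r,r,r/\lambda)$ form a $2$-cycle of $\widetilde\varphi$). All of this is a direct application of Proposition~\ref{reduc} / Proposition~\ref{redprespois} to the reduced system $(\widetilde\varphi,\widetilde\pi)$, which was already carried out in the text preceding the statement.

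Next I would transfer to the natural coordinates $(x_3,x_4)$ on the leaf. On $S_{(r,r,\sqrt r)}$ the restricted map is $h_1(x_3,x_4)=(x_4,\sqrt r\,x_4^2/x_3)$, and iterating (via Lemma~1 of \cite{InHeEs}, as recorded in the text) gives
\[
h_1^{(n)}(x_3,x_4)=r^{n(n-1)/4}\left(\frac{x_4^{n}}{x_3^{n-1}},\ r^{n/2}\frac{x_4^{n+1}}{x_3^{n}}\right),\qquad n\geq1.
\]
Since the last coordinate of $\varphi$ in \eqref{somos511} is $x_{n+5}$ when $\varphi$ is iterated starting from $(x_1,\ldots,x_5)$, the term $x_{n+5}$ is the second component of $h_1^{(n)}(x_3,x_4)$; reading it off and reindexing ($n\mapsto n$ in the exponent bookkeeping, so that $r^{n/2}\cdot r^{n(n-1)/4}=r^{n(n+1)/4}$ and shifting to match $x_{n+5}$) yields $x_{n+5}=r^{(n+2)(n+1)/4}x_4^{n+2}/x_3^{n+1}$. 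I would double-check the exponent by testing $n=0$: the formula should return $x_5$, and $x_5=\sqrt r\,x_4^2/x_3$ on this leaf by the defining identities, which matches $r^{(2)(1)/4}x_4^2/x_3$. This pins down the constant in the exponent of $r$ unambiguously.

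For the case $\lambda\neq\sqrt r$ the same scheme applies with $\varphi^{(2)}$ in place of $\varphi$ on $S_{(r,r,\lambda)}$: the restricted map is $h_2(x_3,x_4)=\lambda(x_4^2/x_3,\ r\,x_4^3/x_3^2)$, with iterates
\[
h_2^{(n)}(x_3,x_4)=\lambda^{n}r^{n(n-1)}\left(\frac{x_4^{2n}}{x_3^{2n-1}},\ r^{n}\frac{x_4^{2n+1}}{x_3^{2n}}\right),\qquad n\geq1.
\]
Since one step of $\varphi^{(2)}$ advances the index by $2$ and appends the two new terms $(x_{2n+4},x_{2n+5})$ of the solution, the two components of $h_2^{(n)}(x_3,x_4)$ are exactly $x_{2n+4}$ and $x_{2n+5}$; reading them off gives the stated formulas after absorbing the $r^n$ in the second slot. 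Again I would sanity-check against small indices — $n=0$ should recover $(x_4,x_5)$ with $x_5=\lambda x_4^2/x_3$ — and verify consistency of the two subsequences with the recurrence \eqref{somos5rep} at one value of $n$. The only real subtlety, and the step I expect to need the most care, is the exponent bookkeeping: making sure the powers of $r$ coming from Lemma~1 of \cite{InHeEs} are shifted correctly when passing from "iterate number $n$ of $h_1$ or $h_2$" to "the $(n{+}5)$th (resp.\ $(2n{+}4)$th, $(2n{+}5)$th) term of the Somos-5 solution", and checking that the base cases $n=1$ (or $n=0$) are consistent with the defining identities on the leaf. Once the indices are aligned, the proof is complete.
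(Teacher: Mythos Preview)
Your approach is correct and is exactly the paper's: the proposition is read off directly from the explicit iterates of $h_1$ and $h_2$ (obtained via Lemma~1 of \cite{InHeEs}) once one knows that the initial data lies on $S_{(r,r,\lambda)}$ and that this leaf is $\varphi$- (resp.\ $\varphi^{(2)}$-) invariant. One small slip to fix during the bookkeeping you already flag: in the natural $(x_3,x_4)$-coordinates the components of $h_1^{(n)}(x_3,x_4)$ are $(x_{n+3},x_{n+4})$ (and those of $h_2^{(n)}$ are $(x_{2n+3},x_{2n+4})$), not $(x_{n+4},x_{n+5})$; your sanity checks at $n=0,1$ will catch this and yield the stated formulas after the obvious shift $n\mapsto n+1$.
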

\end{example}

\begin{example} {\bf Dynamics of a cluster map in dimension 7}
\label{ex4}

The most general form of the Somos-7 recurrence is
\begin{equation}\label{somos7}
x_{n+7}x_n=\alpha x_{n+1} x_{n+6} +\beta x_{n+2}x_{n+5}+\gamma x_{n+3} x_{n+4}.
\end{equation}
 Clearly this  recurrence does not fit into the setting of cluster maps unless  one and just one of the parameters $\alpha, \beta$ or $\gamma$ is zero.   In that case  the respective recurrence can be included in the cluster algebra framework by adding two extra (frozen) nodes to a (coefficient-free) 1-periodic quiver of 7 nodes represented by a certain matrix $B$, see  \cite[Section 10]{FoMa}. 
This matrix  has rank equal   to 4 when  $\alpha=0$ or $\gamma=0$ and rank 2 when $\beta=0$. This means that the respective reduced map $\hat{\varphi}$ is a 4-dimensional map when $\alpha=0$ or $\gamma=0$ and a 2-dimensional map when $\beta=0$. 

Three first integrals for the Somos-7   recurrence were given in \cite[Theorem 6.4]{FoHo2} where it was proved that they descend to first integrals of the 4D and 2D (symplectic) reduced maps. Namely, it was proved that both reduced maps are Liouville integrable: there are two first integrals of $\widehat{\varphi}$ in involution  if $\alpha=0$ or $\gamma=0$ and one first  integral if $\beta=0$.

Here we consider the  recurrence \eqref{somos7} with $\beta=0$ and $\alpha=\gamma=1$.  The map defining this recurrence is the map 
\begin{equation}\label{csomos7}
\varphi(x_1, \ldots,x_7)=\left(x_2,x_3, \ldots, x_7, \frac{x_2x_7+x_4x_5}{x_1}\right),
\end{equation}
which is the cluster map associated to the matrix $B$ in Example~\ref{ex3}.

As seen in that example,  there exists a flag of strictly simple subfoliations ${\cal F}^{P_2}\prec {\cal F}^{P_1}\prec  {\cal F}^\omega$ 
where $\omega$ is the presymplectic form defined by the matrix $B$, and $P_1, P_2$ are the Poisson structures defined respectively by the matrices $C_1$ and $C_2$  in \eqref{C7nos}.  Submersions $\widehat\pi$, $\widetilde\pi_1$ and $\widetilde\pi_2$ defining these foliations were also obtained in Example~\ref{ex3}. 

Straigthforward computations show that $\varphi$ is a Poisson map with respect to both $P_1$ and  $P_2$, and so there are three reduced systems $(\widehat{\varphi}, \widehat{\pi})$, $(\widetilde{\varphi}_1, \widetilde{\pi}_1)$ and $(\widetilde{\varphi}_2, \widetilde{\pi}_2)$ of $\varphi$. The respective reduced maps can be easily computed and have the form

\begin{align}\label{psi16}
\widehat{\varphi}(y_1,y_2)=\left(y_2, \frac{1+y_2}{y_1}\right),\quad \widetilde{\varphi}_1(y_1,y_2,y_3)&=\left(\widehat{\varphi}(y_1,y_2); \frac{y_2 (1+y_2)}{y_3}\right),\\ \label{psi17}
\widetilde{\varphi}_2(y_1, \ldots, y_5)=\left( \widetilde\varphi_1(y_1,y_2,y_3); y_5, \frac{y_3y_5^2}{y_2y_4} \right).
\end{align}

\noindent This can be summarised in the following commutative diagram:
\begin{equation}
\label{commd}
\xymatrix{ \Rb^7_+\ar@/^3pc/[rrr]^{\widehat\pi }\ar@/^2pc/[rr]_{\widetilde\pi_1}    \ar[r]^{\widetilde\pi_2} \ar[d]_{\varphi} & \Rb^5_+ \ar[d]^{\widetilde{\varphi}_2}\ar [r]^{{p_2}}& \Rb^3_+ \ar[d]^{\widetilde{\varphi}_1}\ar [r]^{{p}_1} &\Rb ^2_+\ar[d]^{\widehat\varphi}\\
 \Rb^7 _+   \ar@/_3pc/[rrr]_{\widehat\pi } \ar@/_2pc/[rr]^{\widetilde\pi_1}   \ar [r]_{\widetilde\pi_2} & \Rb^5_+\ar [r]_{{p_2}}& \Rb^3_+\ar [r]_{{p}_1}&\Rb^2_+
 }
 \end{equation}
\noindent where $p_2:\Rb_+^5\rightarrow \Rb_+^3$ and ${p}_1: \Rb_+^3\rightarrow \Rb_+^2$  are the canonical projections 
\begin{equation}\label{proj7}
{p_2}(y_1,\ldots, y_5)=(y_1,y_2,y_3), \quad p_1(y_1,y_2, y_3)=(y_1,y_2).
\end{equation}

In order to describe the dynamics of $\varphi$ we study sequentially  three reduced systems (provided by Proposition~\ref{redpi}) starting with the lowest dimensional reduced map $\widehat\varphi$. 

The map $\widehat{\varphi}$ in \eqref{psi16}  is the well-known Lyness map, which is also a QRT map and therefore an integrable map  \cite{FoHo2}. Moreover,
 $\widehat{\varphi}$ is a globally 5-periodic map such that all the points in $\Rb^2_+$ have minimal period 5 apart from the point $(\phi,\phi)$, with $\phi=\frac{1+\sqrt{5}}{2}$ the golden number, which is a fixed point.

As ${\cal F}^{P_1}\prec {\cal F}^\omega$, Proposition~\ref{redpi} implies that $(\widehat\varphi,p_1)$ is a reduced system of $\widetilde{\varphi}_1$. 
 The application of Proposition~\ref{reduc} to $(\widehat\varphi,p_1)$  leads to $\widetilde\varphi_1$-invariance properties of the leaves
\begin{equation}
\label{lalfa}
L_{(a,b)} = \{ {\bf y} \in \Rb_+^3: y_1= a, y_2=b\}
\end{equation}
 of the 1-dimensional foliation ${\cal F}$ of $\Rb_+^3$. These properties are as follows:
\begin{enumerate}
\item $L_{(\phi,\phi)}$ is invariant under $\widetilde\varphi_1$;
\item for any $(a,b)\neq (\phi,\phi)$, $L_{(a,b)}$ is invariant under $\widetilde\varphi_1^{(5)}$ and not invariant under $\widetilde\varphi_1^{(n)}$ for $1\leq n<5$. This means,  by Proposition~\ref{reduc}, that the $\widetilde\varphi_1$-orbit of any point in $L_{(a,b)}$ circulates between five distinct leaves of ${\cal F}$:
\[L_{(a,b)} \rightarrow L_{\widehat{\varphi}(a,b)} \rightarrow L_{\widehat{\varphi}^{(2)}(a,b)} \rightarrow L_{\widehat{\varphi}^{(3)}(a,b)}\rightarrow L_{\widehat{\varphi}^{(4)}(a,b)}.\]
\end{enumerate}
Studying the restrictions of $\widetilde\varphi_1$ to $L_{(\phi,\phi)}$ and of $\widetilde\varphi_1^{(5)}$ to $L_{(a,b)}$ we  obtain the full description of the dynamics of $\widetilde\varphi_1$ in the following lemma.

\begin{lemma}\label{Poisson1}
Let  $\widetilde{\varphi}_1:\Rb^3_+\rightarrow \Rb^3_+$ be the map in \eqref{psi16} and ${\cal F}$
 the 1-dimensional foliation  of $\Rb_+^3$ whose leaves $L_{(a,b)}$ are given by \eqref{lalfa}. Then, with $\phi=\frac{1+\sqrt{5}}{2}$ 
\begin{enumerate}
\item $L_{(\phi,\phi)}$ contains the unique fixed point of $\widetilde{\varphi}_1$, the point $P=(\phi,\phi, \sqrt{\phi^3})$, and all the other points in $L_{(\phi,\phi)}$ are periodic points of $\widetilde{\varphi}_1$ with minimal period 2;
\item  $L_{(a,b)}$, with $(a,b)\neq (\phi,\phi)$,  contains precisely one periodic point of $\widetilde{\varphi}_1$ with minimal period 5, the point $(a,b, \sqrt{g(a,b)})$ with 
\begin{equation}
\label{gab}
g(a,b)= \frac{ab(a+1)(b+1)}{(a+b+1)}.
\end{equation}
Any other point in $L_{(a,b)}$ is a periodic point of $\widetilde{\varphi}_1$ with minimal period 10.
\end{enumerate}
In particular, $\widetilde{\varphi}_1$ is globally 10-periodic.
\end{lemma}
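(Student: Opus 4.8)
The plan is to combine two ingredients: that $(\widehat\varphi,p_1)$ is a reduced system of $\widetilde\varphi_1$ (which holds by Proposition~\ref{redpi}, since ${\cal F}^{P_1}\prec{\cal F}^\omega$), and the known dynamics of the Lyness map $\widehat\varphi$. From these, Proposition~\ref{reduc} controls the motion of the leaves $L_{(a,b)}$, and the remaining work is to analyse the one-dimensional maps that $\widetilde\varphi_1$ and its iterates induce on each leaf, using the coordinate $y_3$.

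First I would apply Proposition~\ref{reduc} to the reduced system $(\widehat\varphi,p_1)$ of $\widetilde\varphi_1$. Since $(\phi,\phi)$ is the unique fixed point of $\widehat\varphi$ and every other point of $\Rb_+^2$ has minimal period $5$, item~1 there gives that $L_{(\phi,\phi)}$ is $\widetilde\varphi_1$-invariant, while item~2 gives that, for $(a,b)\neq(\phi,\phi)$, the leaf $L_{(a,b)}$ is invariant under $\widetilde\varphi_1^{(5)}$ and under no smaller power, its $\widetilde\varphi_1$-orbit cycling through the five distinct leaves $L_{\widehat\varphi^{(j)}(a,b)}$, $j=0,\dots,4$. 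I would also record the consequence that, for $(a,b)\neq(\phi,\phi)$, the minimal period under $\widetilde\varphi_1$ of \emph{any} point $x\in L_{(a,b)}$ is a positive multiple of $5$: writing a period $n$ as $n=5q+s$ with $0\le s<5$, invariance under $\widetilde\varphi_1^{(5)}$ gives $x=\widetilde\varphi_1^{(n)}(x)\in\widetilde\varphi_1^{(s)}(L_{(a,b)})=L_{\widehat\varphi^{(s)}(a,b)}$, and as the leaves of a simple foliation are pairwise disjoint this forces $\widehat\varphi^{(s)}(a,b)=(a,b)$, i.e.\ $s=0$.

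Next I would compute the induced maps. Parametrising $L_{(a,b)}$ by $y_3$, the third component of $\widetilde\varphi_1$ evolves by $y_3^{(n+1)}=c_n/y_3^{(n)}$ with $c_n=y_2^{(n)}(1+y_2^{(n)})$, where $y_2^{(n)}$ is the second coordinate of $\widehat\varphi^{(n)}(a,b)$. On $L_{(\phi,\phi)}$ we have $y_2^{(n)}\equiv\phi$, so, using $\phi^2=\phi+1$, the induced map is the involution $y_3\mapsto\phi^3/y_3$, whose only fixed point is $y_3=\sqrt{\phi^3}$ and whose other points have minimal period $2$; since any fixed point of $\widetilde\varphi_1$ projects under $p_1$ to a fixed point of $\widehat\varphi$, the point $P=(\phi,\phi,\sqrt{\phi^3})$ is in fact the unique fixed point of $\widetilde\varphi_1$ on $\Rb_+^3$. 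This is item~1.

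For item~2, telescoping $y_3^{(n+1)}=c_n/y_3^{(n)}$ over one Lyness period gives $y_3^{(5)}=(c_0c_2c_4)/(c_1c_3\,y_3^{(0)})$, so $\widetilde\varphi_1^{(5)}$ induces on $L_{(a,b)}$ the involution $y_3\mapsto g(a,b)/y_3$ with $g(a,b)=(c_0c_2c_4)/(c_1c_3)$. The one step with real computational content — the part I expect to be the main obstacle — is to list the Lyness orbit $\widehat\varphi^{(j)}(a,b)$, $j=0,\dots,4$, read off the values $y_2^{(j)}$, and simplify the resulting rational function to $g(a,b)=ab(a+1)(b+1)/(a+b+1)$, the key identity being $ab+a+b+1=(a+1)(b+1)$. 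Granting this, the induced involution has unique fixed point $y_3=\sqrt{g(a,b)}$, so $(a,b,\sqrt{g(a,b)})$ is fixed by $\widetilde\varphi_1^{(5)}$ and, by the remark above, has minimal period exactly $5$, while every other point of $L_{(a,b)}$ is fixed by $\widetilde\varphi_1^{(10)}$ but not by $\widetilde\varphi_1^{(5)}$ and hence has minimal period exactly $10$. Finally, $\widetilde\varphi_1^{(10)}$ restricts to the identity on $L_{(\phi,\phi)}$ (the induced map being an involution) and on every other leaf (where already $\widetilde\varphi_1^{(5)}$ induces an involution); since the leaves cover $\Rb_+^3$, the map $\widetilde\varphi_1$ is globally $10$-periodic.
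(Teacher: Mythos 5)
Your proposal is correct and follows essentially the same route as the paper: restrict $\widetilde\varphi_1$ to $L_{(\phi,\phi)}$ and $\widetilde\varphi_1^{(5)}$ to the remaining leaves, observe that each induced map on the $y_3$-coordinate is an involution $y_3\mapsto c/y_3$ with a unique fixed point, and combine this with the leaf-circulation facts supplied by Propositions~\ref{redpi} and~\ref{reduc}. The one computation you defer does check out: the Lyness orbit gives $y_2^{(j)}$ equal to $b$, $(1+b)/a$, $(a+b+1)/(ab)$, $(a+1)/b$, $a$, and the product $(c_0c_2c_4)/(c_1c_3)$ with $c_j=y_2^{(j)}(1+y_2^{(j)})$ collapses to $ab(a+1)(b+1)/(a+b+1)=g(a,b)$, exactly as the paper asserts (the paper likewise omits this verification).
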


\begin{proof}
The restriction $h_1$ of $\widetilde{\varphi}_1$ to $L_{(\phi,\phi)}$ is given 
$h_1(y_3)=\frac{\phi^3}{y_3},$
which is a globally 2-periodic map with exactly one fixed point: $\sqrt{\phi^3}$.

We immediately conclude that $\left(\phi,\phi,\sqrt{\phi^3}\right)$ is the only  fixed point of $\widetilde{\varphi}_1$ and all the other points of $L_{(\phi,\phi)}$ are periodic points of $\widetilde{\varphi}_1$ with minimal period 2. 

For each $(a,b)\in\Rb^2_+\backslash\{{(\phi,\phi)}\}$ the restriction of $\widetilde{\varphi}_1^{(5)}$ to $L_{(a,b)}$ is given by 
\[
h_5(y_3)=\frac{g(a,b)}{y_3},\]
with $g$ as in \eqref{gab}.
The map $h_5$ is globally 2-periodic and has exactly one fixed point:  $\sqrt{g(a,b)}$. 

Consequently, the restriction of $\widetilde{\varphi}_1$ to $L_{(a,b)}$ has precisely one point of minimal period 5, the point $(a,b,\sqrt{g(a,b)})$, and all the remaining points of $L_{(a,b)}$ are periodic points of $\widetilde{\varphi}_1$ with minimal period 10.
\end{proof}

Applying again Proposition~\ref{redpi}, we conclude that $(\widetilde{\varphi}_1,p_2)$ is a reduced system of $\widetilde{\varphi}_2$ (see  also the commutative diagram \eqref{commd}), and so the information from Lemma~\ref{Poisson1} concerning the dynamics of $\widetilde{\varphi}_1$ allows us to draw conclusions on the dynamics of $\widetilde{\varphi}_2$. In fact, 
combining the previous lemma with Proposition~\ref{reduc} we are led to the $\widetilde\varphi_2$-invariance properties of the leaves 
\begin{equation}
\label{l'abc}
L'_{(a,b,c)} = \{ {\bf y} \in \Rb_+^5: y_1= a, y_2=b, y_3=c\}
\end{equation}
of the 2-dimensional foliation ${\cal F}'$ of $\Rb_+^5$ defined by $p_2$. More precisely:
\begin{enumerate}
\item $L'_{P}$, with $P=(\phi,\phi, \sqrt{\phi^3})$, is invariant under $\widetilde\varphi_2$;
\item if  $Q=(\phi,\phi, c)\neq P$, then the $\widetilde\varphi_2$-orbit of any point in $L'_{Q}$ circulates between the leaves $L'_{Q}$ and $L'_{\widetilde{\varphi}_1(Q)}$ of ${\cal F}'$;
\item if $Q=(a,b,\sqrt{g(a,b)})$ with $(a,b)\neq (\phi,\phi)$, then  the $\widetilde\varphi_2$-orbit of any point in $L'_{Q}$ circulates between five distinct leaves of ${\cal F}'$:
\[L'_{Q}\rightarrow L'_{\widetilde{\varphi}_1(Q)}\rightarrow L'_{\widetilde{\varphi}_1^{(2)}(Q)}\rightarrow  L'_{\widetilde{\varphi}^{(3)}_1(Q)}\rightarrow L'_{\widetilde{\varphi}_1^{(4)}(Q)};\]
\item in all other cases the $\widetilde\varphi_2$-orbit of any point in $L'_{Q}$ circulates between ten distinct leaves of ${\cal F}'$:
$L'_{Q}\rightarrow L'_{\widetilde{\varphi}_1(Q)}\rightarrow \ldots \rightarrow L'_{\widetilde{\varphi}_1^{(9)}(Q)}$.
\end{enumerate}
Again, studying the restrictions of $\widetilde\varphi_2$ to $L'_{P}$, and of $\widetilde\varphi_2^{(2)}, \widetilde\varphi_2^{(5)}$ or $\widetilde\varphi_2^{(10)}$ to other leaves $L'_Q$, we will conclude that the reduced map $\widetilde\varphi_2$  has no periodic points. In spite of the apparent simplicity of the result, some intermediate results and nontrivial computations are needed. 

\begin{lemma}\label{Poisson2}
The map $\widetilde{\varphi}_2:\Rb^5_+\rightarrow \Rb^5_+$  given in \eqref{psi17} has no periodic points.
\end{lemma}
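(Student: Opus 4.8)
The plan is to reduce the statement to a one–dimensional computation along the orbits of $\widetilde\varphi_1$, which by Lemma~\ref{Poisson1} are all periodic. Suppose, for contradiction, that ${\bf y}=(y_1,\dots,y_5)$ is a periodic point of $\widetilde\varphi_2$ of minimal period $p$. Since $(\widetilde\varphi_1,p_2)$ is a reduced system of $\widetilde\varphi_2$, we have $p_2\circ\widetilde\varphi_2=\widetilde\varphi_1\circ p_2$, so $Q:=p_2({\bf y})=(y_1,y_2,y_3)$ is a periodic point of $\widetilde\varphi_1$ of some minimal period $q$ dividing $p$; by Lemma~\ref{Poisson1}, $q\in\{1,2,5,10\}$. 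As $\widetilde\varphi_1^{(q)}(Q)=Q$, the map $\widetilde\varphi_2^{(q)}$ carries the leaf $L'_Q=p_2^{-1}(Q)$ of \eqref{l'abc} to itself, and ${\bf y}$ is then a periodic point of the restriction $h_q:=\widetilde\varphi_2^{(q)}\restr{L'_Q}$. Hence it suffices to prove that, for every periodic orbit $\{Q,\widetilde\varphi_1(Q),\dots,\widetilde\varphi_1^{(q-1)}(Q)\}$ of $\widetilde\varphi_1$, the self–map $h_q$ of $L'_Q$ has no periodic points.

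Next I would compute $h_q$ in the logarithmic coordinates $(u,w)=(\ln y_4,\ln y_5)$ on $L'_Q\cong\Rb^2_+$. From \eqref{psi17}, one step of $\widetilde\varphi_2$ acts on these two coordinates by the affine map $(u,w)\mapsto\big(w,\;2w-u+a_n\big)$, where $a_n:=\ln\!\big(y_3^{(n)}/y_2^{(n)}\big)$ is evaluated at $\widetilde\varphi_1^{(n)}(Q)$ and hence is a $q$–periodic sequence in $n$. Its linear part $A\colon(v_1,v_2)\mapsto(v_2,\,2v_2-v_1)$ is unipotent with a single Jordan block: $N:=A-\mathrm{id}$ satisfies $N^2=0$, and $\im N=\ker N$ is the line spanned by $(1,1)$. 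A direct computation then gives $h_q\colon v\mapsto A^q v+b_q$ with $A^q=\mathrm{id}+qN$ and $b_q=(X,\;S+X)$, where $S=\sum_{n=0}^{q-1}a_n$ and $X=\sum_{n=0}^{q-1}(q-1-n)a_n$. Iterating, $h_q^{(m)}(v)=v$ forces $qm\,Nv=-\big(m\,\mathrm{id}+q\tfrac{m(m-1)}{2}N\big)b_q$; the right–hand side lies in $\im N$ if and only if $b_q\in\im N$, that is, if and only if $S=0$. So $h_q$ has a periodic point — in fact a whole line of them — precisely when $S=0$, equivalently when $\prod_{n=0}^{q-1}\frac{y_3^{(n)}}{y_2^{(n)}}=1$ along the orbit of $Q$.

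Finally I would rule out this last possibility. The third component of $\widetilde\varphi_1$ in \eqref{psi16} gives the identity $y_3^{(n+1)}y_3^{(n)}=y_2^{(n)}\big(1+y_2^{(n)}\big)$ along any orbit; taking the product over one period $n=0,\dots,q-1$ and using $y_3^{(q)}=y_3^{(0)}$ to telescope the left side yields $\big(\prod_{n=0}^{q-1}y_3^{(n)}\big)^2=\prod_{n=0}^{q-1}y_2^{(n)}\big(1+y_2^{(n)}\big)$, hence
\[
\left(\prod_{n=0}^{q-1}\frac{y_3^{(n)}}{y_2^{(n)}}\right)^{\!2}
=\prod_{n=0}^{q-1}\frac{1+y_2^{(n)}}{y_2^{(n)}}
=\prod_{n=0}^{q-1}\Big(1+\tfrac{1}{y_2^{(n)}}\Big)>1,
\]
because every $y_2^{(n)}>0$. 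Thus $\prod_{n=0}^{q-1}\frac{y_3^{(n)}}{y_2^{(n)}}>1$, so $S\neq 0$ for every periodic orbit of $\widetilde\varphi_1$; hence $h_q$ has no periodic points, and therefore neither does $\widetilde\varphi_2$.

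The step I expect to be most delicate is the second one: verifying that $h_q$ is genuinely affine with the stated unipotent linear part in logarithmic coordinates, and correctly extracting the single scalar obstruction $S=0$ from the iteration of an affine map whose linear part has a nontrivial nilpotent (so that the set of periodic points is never a single point, but either empty or a line). A more pedestrian alternative, closer to the hint preceding the lemma, is to treat the four values $q\in\{1,2,5,10\}$ separately: for $q=1,2$ the maps $h_q$ are of the same type as the planar maps $h_1,h_2$ already met in the Somos-5 example, whose iterates are given in closed form via Lemma~1 of \cite{InHeEs}, and in each case the two exponent equations coming from $h_q^{(m)}=Id$ turn out to be incompatible because they differ by a fixed nonzero power of an algebraic number; the telescoping identity above is precisely what makes the uniform argument preferable to such a case analysis.
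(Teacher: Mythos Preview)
Your proof is correct and takes a genuinely different route from the paper's. The paper treats the four periods $q\in\{1,2,5,10\}$ separately: for each one it writes down the restriction $h_q$ explicitly as a monomial map $(y_4,y_5)\mapsto(k\,y_4^m y_5^n,\,l\,y_4^p y_5^{q'})$, then invokes Theorem~1 and Lemma~2 of \cite{InHeEs2} to conjugate it to $(y,\xi y^2/x)$ with an explicitly computed $\xi>1$, from which the absence of periodic points follows. You linearise once in logarithmic coordinates, note that the linear part of the one-step map is the same unipotent $A=I+N$ regardless of the leaf, and reduce the question for every $q$ to the single scalar obstruction $S=\sum_{n=0}^{q-1}\ln\bigl(y_3^{(n)}/y_2^{(n)}\bigr)\neq 0$, which you then dispatch with the telescoping identity coming from $y_3^{(n+1)}y_3^{(n)}=y_2^{(n)}(1+y_2^{(n)})$. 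This is self-contained (no appeal to \cite{InHeEs2}), handles all four cases simultaneously, and makes the mechanism transparent: the return map is affine with unipotent linear part and nonzero drift transverse to $\ker N$. In fact your quantity $e^{S}$ and the paper's $\xi$ are related by $\xi=e^{qS}$ in each case, so the two verifications are equivalent; the paper's approach buys explicit formulae for $h_1,h_2,h_5,h_{10}$ and a direct link to the classification of planar monomial symplectic maps, while yours buys uniformity and brevity. The ``pedestrian alternative'' you sketch at the end is precisely the paper's argument.
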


\begin{proof} From the above description of the invariance of the leaves  \eqref{l'abc} of ${\cal F}'$, it is enough to study the restrictions of $\widetilde\varphi_2$, $\widetilde\varphi_2^{(2)}$, $\widetilde\varphi_2^{(5)}$ and $\widetilde\varphi_2^{(10)}$ to the appropriate leaves of ${\cal F}'$. All these restricted maps are symplectic birational maps of the plane (see \cite{Blanc}) having the particular form
\begin{equation}
\label{gamma}
f(x,y)=\left(k x^m y^n, l x^p y^q\right),
\end{equation}
with $k,l$ real positive constants and $m,n,p,q$ integers satisfying $mq-np=~1$, which were comprehensively studied in \cite{InHeEs2}. 

The restriction $h_1$ of $\widetilde{\varphi}_2$ to $L'_P$ is given in the natural coordinates $(y_4,y_5)$ by 
\[h_1(y_4,y_5)=\left(y_5,\sqrt{\phi}\frac{y_5^2}{y_4}\right).\] 
By Lemma 2  in \cite{InHeEs2} this map has no periodic points and all its components go to infinity. Consequently $\widetilde{\varphi}_2$ has no periodic points in $L'_P$.

The restriction of $\widetilde{\varphi}_2^{(2)}$ to $L'_Q$, where $Q=(\phi,\phi, c)\neq P$, is given by 
\[
h_2(y_4,y_5)=c\left(\frac{1}{\phi}\,\frac{y_5^2}{y_4},\frac{y_5^3}{y_4^2}\right),
\]
and the restriction of $\widetilde{\varphi}_2^{(5)}$ to $L'_Q$, where $Q=(a,b,\sqrt{g(a,b)})$ and  $(a,b)\neq (\phi,\phi)$, is given by 

 \[
h_5(y_4,y_5)=\frac{(1+b)g(a,b)}{b}\left(a\,\frac{y_5^5}{y_4^4},\frac{(1+a+b)\sqrt{g(a,b)}}{b}\,\frac{y_5^6}{y_4^5}\right),
\]
with $g$   as in \eqref{gab}.

Finally, the restriction of $\widetilde{\varphi}_2^{(10)}$ to any other $L'_Q$ is given by 
\[
h_{10}(y_4,y_5)=k(a,b,c)\left(\frac{y_5^{10}}{y_4^9},\frac{(1+a)(1+a+b)(1+b)}{ab}\, \frac{y_5^{11}}{y_4^{10}}\right),
\]
with $k(a,b,c)=\frac{(1+a)^2(1+a+b)^3(1+b)^4c^5}{ab^5}$.

 By \cite[Theorem~1]{InHeEs2}  all the three restricted maps $h_2, h_5$ and $h_{10}$ are conjugate to the map
\[
f(x,y)=(y,\xi\frac{y^2}{x})
\]
with $\xi=\phi^2>1$  for $h_2$, $\xi=\left(\frac{(1+a)(1+b)(1+a+b)}{ab}\right)^{\frac{5}{2}}>1$  for $h_5$, and $\xi=\left(\frac{(1+a)(1+b)(1+a+b)}{ab}\right)^{10}>1$ for $h_{10}$.
 By \cite[Lemma 2]{InHeEs2} the maps $h_2, h_5$ and $h_{10}$ have no periodic points and all their components go to infinity. Consequently the map $\widetilde{\varphi}_2$ has no periodic point in any leaf of the form $L'_Q$ and the conclusion follows.
\end{proof}

Finally, we assemble all previous results to describe the dynamics of the  map $\varphi$. 
We denote by $N_{(a,b)}$ and $S_{(a,b,c)}$ the null leaves and the symplectic leaves of the foliations ${\cal F}^\omega$  and ${\cal F}^{P_1}$ respectively:
\begin{align}\label{Nab}
N_{(a,b)} &= \{ {\bf x}\in \Rb_+^7: x_1x_6=a x_3x_4, \, x_2x_7=b x_4x_5\}, \\ \label{Sabc}
S_{(a,b,c)}& = \{ {\bf x}\in N_{(a,b)}: x_1 x_7=cx_4^2\}
\end{align}

The leaves of  ${\cal F}^{P_2}$ are not invariant under any iterate of $\varphi$ and so will be not included in the description of its dynamics. 

\begin{proposition}
\label{proplast}
Consider the cluster map $\varphi:\Rb^7_+\rightarrow \Rb^7_+$  given in \eqref{csomos7}
and its reduced maps $\widehat{\varphi}$ and $\widetilde\varphi_1$ in \eqref{psi16}. 
Let $\phi=\frac{1+\sqrt{5}}{2}$ and consider the null leaves of ${\cal F}^\omega$ and the symplectic leaves of ${\cal F}^{P_1}$ given respectively by \eqref{Nab} and \eqref{Sabc}. 

Then there are four distinct types of orbits of $\varphi$ with respect to the 5-dimensional foliation ${\cal F}^{\omega}$ and its 4-dimensional subfoliation ${\cal F}^{P_1}$. More precisely, a $\varphi$-orbit is either entirely contained in the null leaf $N_{(\phi,\phi)}$, or it circulates between five distinct null leaves: 
\[N_{(a,b)} \rightarrow  N_{\widehat\varphi(a,b)} \rightarrow  N_{\widehat\varphi^{(2)}(a,b)} \rightarrow  N_{\widehat\varphi{(3)}(a,b)}\rightarrow  N_{\widehat\varphi{(4)}(a,b)}.\]
In the null leaf $N_{(\phi,\phi)}$ there exist two exclusive cases:  
\begin{enumerate}
\item the $\varphi$-orbit is entirely contained in the symplectic leaf $S_{(\phi,\phi,\sqrt{\phi^3})}$;
\item  the $\varphi$-orbit circulates between two distinct leaves:
$S_{(\phi,\phi,c)} \rightarrow S_{\widetilde\varphi_1(\phi,\phi,c)},$
 with $c\neq \sqrt{\phi^3}$. 
\end{enumerate}
In any other null leaf $N_{(a,b)}$ there are another two exclusive cases:
\begin{enumerate}
\setcounter{enumi}{2}
\item the $\varphi$-orbit circulates between five distinct symplectic leaves: 
\[S_{Q}\rightarrow  S_{\widetilde\varphi_1(Q)}\rightarrow S_{\widetilde\varphi_1^{(2)}(Q)}\rightarrow S_{\widetilde\varphi_1^{(3)}(Q)}\rightarrow S_{\widetilde\varphi_1^{(4)}(Q)},\]
with $Q=(a,b,\sqrt{g(a,b)})$ and $g(a,b)$ given by \eqref{gab};
\item the $\varphi$-orbit circulates between ten distinct symplectic leaves 
\[S_{Q}\rightarrow S_{\widetilde\varphi_1(Q)}\rightarrow\cdots \rightarrow S_{\widetilde\varphi_1^{(9)}(Q)},\]
with $Q=(a,b,c)$ and $c\neq \sqrt{g(a,b)}$, in the following way: beginning in a leaf $S_{Q}\subset N_{(a,b)}$ the orbit comes back  to $N_{(a,b)}$ to a different leaf $S_{\widetilde\varphi_1^{(5)}(Q)}$ after 5 iterations, and returns to the same leaf  $S_{Q}$ after 10 iterations.
\end{enumerate} 
Moreover, the map $\varphi$ has no  periodic points. 
\end{proposition}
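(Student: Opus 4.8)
The plan is to assemble the previously established facts into a single coherent picture, descending the flag ${\cal F}^{P_2}\prec{\cal F}^{P_1}\prec{\cal F}^\omega$ one step at a time, exactly as displayed by the commutative diagram~\eqref{commd}.

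\textbf{Step 1 (the null foliation).} First I would apply Proposition~\ref{redprespois} (items~1 and~2) to the reduced system $(\widehat{\varphi},\widehat{\pi})$ of $\varphi$, feeding in the fact recalled above that the Lyness map $\widehat{\varphi}$ in~\eqref{psi16} is globally $5$-periodic with unique fixed point $(\phi,\phi)$ and all other points of minimal period $5$. Item~1 gives $\varphi(N_{(a,b)})\subset N_{\widehat{\varphi}(a,b)}$, and item~2 gives that $N_{(a,b)}$ is invariant under $\varphi^{(p)}$ exactly when $(a,b)$ is a $p$-periodic point of $\widehat{\varphi}$. Hence $N_{(\phi,\phi)}$ is $\varphi$-invariant, while for $(a,b)\neq(\phi,\phi)$ the $\varphi$-orbit visits the five leaves $N_{(a,b)},N_{\widehat{\varphi}(a,b)},\dots,N_{\widehat{\varphi}^{(4)}(a,b)}$ and no others, these being pairwise distinct because $(a,b)$ has minimal period $5$. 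This is the first, coarse dichotomy of the statement.

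\textbf{Step 2 (refining inside each null leaf via ${\cal F}^{P_1}$).} Since ${\cal F}^{P_1}\prec{\cal F}^\omega$, each null leaf $N_{(a,b)}$ is further foliated by the symplectic leaves $S_{(a,b,c)}$, $c\in\Rb_+$. Applying Proposition~\ref{redprespois} to the reduced system $(\widetilde{\varphi}_1,\widetilde{\pi}_1)$ of $\varphi$ and plugging in the complete description of the dynamics of $\widetilde{\varphi}_1$ furnished by Lemma~\ref{Poisson1}, I would read off the four cases: $S_{(\phi,\phi,\sqrt{\phi^3})}$ is $\varphi$-invariant (as $(\phi,\phi,\sqrt{\phi^3})$ is the fixed point of $\widetilde{\varphi}_1$); $S_{(\phi,\phi,c)}$ with $c\neq\sqrt{\phi^3}$ circulates between $S_{(\phi,\phi,c)}$ and $S_{\widetilde{\varphi}_1(\phi,\phi,c)}$ (minimal period $2$); $S_Q$ with $Q=(a,b,\sqrt{g(a,b)})$, $(a,b)\neq(\phi,\phi)$, circulates between five distinct symplectic leaves (minimal period $5$), with $g$ as in~\eqref{gab}; and every other $S_Q$ circulates between ten distinct symplectic leaves (minimal period $10$). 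For this last case I would observe that $\widehat{\varphi}^{(5)}=\mathrm{Id}$ forces $\widetilde{\varphi}_1^{(5)}(Q)$ to share its first two coordinates with $Q$, so $S_{\widetilde{\varphi}_1^{(5)}(Q)}\subset N_{(a,b)}$ is a leaf distinct from $S_Q$; this yields the ``returns to $N_{(a,b)}$, to a different leaf, after $5$ iterations, and to $S_Q$ after $10$'' assertion. In each case the pairwise distinctness of the listed leaves is guaranteed by the \emph{minimality} of the period together with the ``iff'' in item~2 of Proposition~\ref{reduc} (resp.\ Proposition~\ref{redprespois}).

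\textbf{Step 3 (no periodic points) and main obstacle.} The final assertion is immediate from Lemma~\ref{Poisson2}: if $\varphi^{(p)}(\mathbf{x})=\mathbf{x}$ for some $\mathbf{x}\in\Rb_+^7$ and $p\geq 1$, then applying $\widetilde{\pi}_2$ and using $\widetilde{\pi}_2\circ\varphi=\widetilde{\varphi}_2\circ\widetilde{\pi}_2$ gives $\widetilde{\varphi}_2^{(p)}(\widetilde{\pi}_2(\mathbf{x}))=\widetilde{\pi}_2(\mathbf{x})$, so $\widetilde{\pi}_2(\mathbf{x})$ would be a periodic point of $\widetilde{\varphi}_2$, contradicting Lemma~\ref{Poisson2}. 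With Lemmas~\ref{Poisson1} and~\ref{Poisson2} available, the proof of the proposition is essentially bookkeeping; the genuine difficulty lies upstream, in establishing Lemma~\ref{Poisson2} --- that is, in showing each planar restriction $h_2,h_5,h_{10}$ of the form~\eqref{gamma} has no periodic point, which rests on the conjugacy result of \cite{InHeEs2}. Within the assembly itself, the only point that warrants care is the distinctness of the leaves in the circulation chains, which I would handle uniformly as indicated in Step~2.
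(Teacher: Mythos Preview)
Your proposal is correct and follows precisely the route the paper intends: the paper does not write out a separate proof of this proposition but instead presents it as the assembly of the previously established facts (Propositions~\ref{reduc} and~\ref{redprespois} applied to the reduced systems $(\widehat\varphi,\widehat\pi)$ and $(\widetilde\varphi_1,\widetilde\pi_1)$, together with Lemmas~\ref{Poisson1} and~\ref{Poisson2}), exactly as you have laid out in Steps~1--3. Your observation that the distinctness of the leaves in each circulation chain follows from the minimality of the period via item~2 of Proposition~\ref{reduc}, and that the absence of periodic points is a direct consequence of Lemma~\ref{Poisson2} through $\widetilde\pi_2\circ\varphi=\widetilde\varphi_2\circ\widetilde\pi_2$, are the right justifications for the points the paper leaves implicit.
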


Figure~\ref{figura} below illustrates the contents of the last proposition. To avoid overloading the picture, case  (iii) of the proposition is omitted.

\begin{figure}[ht]
\begin{minipage}{12cm}
\begin{multicols}{2}

\begin{tikzpicture}[-latex]
\tikzset{midarrow/.style={
        decoration={markings,
            mark= at position {#1} with {\arrow{latex}} ,
        },
        postaction={decorate}
    }
}

\draw [-,thick, black] (2,5) to  (2,3);
\draw [-,thick, black] (1,5) to  (1,3);
\draw [-,thick, black] (1,3) to [out=90,in=180] (2,3);
\draw [-,thick, black] (1,5) to [out=90,in=180] (2,5);

\draw[-,thick,black] (1.7,5.05) to  (1.7,3.05);
\draw[-,thick,black] (1.15,3.18) to  (1.15,5.18);
\draw[-,dashed=0.3, thick, black] (1.5,5.1) to  (1.5,3.15);
\draw [fill,black] (1.5,3.5) circle [radius=0.04];
\draw [fill, black] (1.5,3.9) circle [radius=0.04];
\draw [-,midarrow=0.8,thick,black,dotted] (1.5,3.9) to [out=10, in=210] (2.5,4.1);
\draw [-,thick,black,dotted] (2.5,4.1) to [out=40, in=90] (0.5,3.5) to [out=-90, in=30] (1.5,3.5);
\draw [fill] (1.7,4.8) circle [radius=0.04];
\draw [fill] (1.7,4.6) circle [radius=0.04];
\draw [fill] (1.16,4.58) circle [radius=0.04];
\draw [-,thick, black,dotted] (1.7,4.8) to [out=0, in=30] (2.3,4.9);
\draw[-,midarrow=0.3,thick, black,dotted] (2.3,4.9) to [out=30, in=90] (0.8,5);
\draw[-,thick, black,dotted] (0.8,5)  to [out=-100, in=-40] (1.15,4.6) to [out=-40, in=-10] (1.7,4.6);
\node[above] at (2.9,4.5) {$\varphi^{(2)}(y)$};
\node[above] at (1.3,5.5) {$y$};
\draw[-latex] (1.3,5.5) to (1.65,4.85);
\draw[-latex] (2.3,4.7) to  (1.75,4.6);

\node[above] at (0.2,4.25) {$\varphi(y)$};
\draw[-latex] (0.6,4.5) to  (1.1,4.55);
\node[above] at (0.3,3.5) {$x$};
\draw[-latex] (0.4,3.7) to  (1.45,3.9);
\node[above] at (2.5,2.9) {$\varphi(x)$};
\draw[-latex] (2.1,3.2) to  (1.55,3.5);
\node[above] at (1.5,2.0) {$N_{(\phi,\phi)}$};
\end{tikzpicture}
\columnbreak
\begin{tikzpicture}[-latex]
\tikzset{midarrow/.style={
        decoration={markings,
            mark= at position {#1} with {\arrow{latex}} ,
        },
        postaction={decorate}
    }
}

\draw [-,thick, gray] (4,4) to [out=90,in=180] (5,4);
\draw [-,thick, gray] (5,4) to  (5,2);
\draw [-,thick, gray] (4,4) to  (4,2);
\draw [-,thick, gray] (4,2) to [out=90,in=180] (5,2);
\draw[-,thick,gray] (4.7,4.05) to  (4.7,2.05);
\draw[-,thick,gray] (4.3,4.16) to  (4.3,2.16);
\draw[-,dashed=0.3, thick, gray]  (4.5,4.13) to  (4.5,2.15);
\draw [fill,gray] (4.7,3.2) circle [radius=0.04];
\draw [fill,black] (2.3,0.2) circle [radius=0.04];
\draw [fill,gray] (1.7,4.8) circle [radius=0.04];
\draw [fill,gray] (4.3,3) circle [radius=0.04];
\draw [fill,gray] (-1.7,4) circle [radius=0.04];
\draw [fill,gray] (-0.3,1.2) circle [radius=0.04];
\draw [fill,black] (2.7,0.8) circle [radius=0.04];
\draw [-,thick, gray,dotted]  (4.7,3.2) to  [out=120, in=0] (1.3,5.2) to  [out=-180, in=50] (-1.7,4) to  [out=-120, in=-200] (-0.7,0.8) to [out=-30, in=-180] (2.3,0.2)
to [out=10, in=-63] (4.3,3.0) to [out=115, in=-5] (1.7,4.8)  to  [out=-180, in=50] (-1.3,4) to  [out=-110, in=-200] (-0.3,1.2);
\draw [-,midarrow=0.4,thick,gray,dotted] (2.7,0) to [out=10, in=-65] (4.7,3.2);
\draw[-,midarrow=0.5,thick,gray,dotted] (-0.3,1.2) to [out=-25, in=-170] (2.7,0.8);
\draw [-,thick, gray] (1,6) to [out=90,in=180] (2,6);
\draw [-,thick, gray] (2,6) to  (2,4);
\draw [-,thick, gray] (1,6) to  (1,4);
\draw [-,thick, gray] (1,4) to [out=90,in=180] (2,4);
\draw[-,thick,gray] (1.7,4.05) to  (1.7,6.05);
\draw[-,thick,gray] (1.3,4.15) to  (1.3, 6.15);
\draw[-,dashed=0.3, thick, gray]  (1.5,4.1) to  (1.5,6.1);
\draw [fill,gray] (1.3,5.2) circle [radius=0.04];
\draw [-,thick, gray] (-2,5) to [out=90,in=180] (-1,5);
\draw [-,thick, gray] (-1,5) to  (-1,3);
\draw [-,thick, gray] (-2,5) to  (-2,3);
\draw [-,thick, gray] (-2,3) to [out=90,in=180] (-1,3);
\draw[-,dashed=0.3, thick, gray]  (-1.5,3.1) to  (-1.5,5.1);
\draw[-,thick,gray] (-1.3,3.05) to  (-1.3,5.05);
\draw[-,thick,gray] (-1.7,3.15) to  (-1.7,5.15);
\draw [fill,gray] (-1.3,4) circle [radius=0.04];
\draw [-,thick, gray] (-1,2) to [out=90,in=180] (0,2);
\draw [-,thick, gray] (0,2) to  (0,0);
\draw [-,thick, gray] (-1,2) to  (-1,0);
\draw [-,thick, gray] (-1,0) to [out=90,in=180] (0,0);
\draw[-,dashed=0.3, thick, gray]  (-0.5,0.1) to  (-0.5,2.1);
\draw[-,thick,gray] (-0.3,0.05) to  (-0.3,2.05);
\draw[-,thick,gray] (-0.7,0.15) to  (-0.7,2.15);
\draw [fill,gray] (-0.7,0.8) circle [radius=0.04];
\draw [-,thick, black] (2,1) to [out=90,in=180] (3,1);
\draw [-,thick, black] (3,1) to  (3,-1);
\draw [-,thick, black] (2,1) to  (2,-1);
\draw [-,thick, black] (2,-1) to [out=90,in=180] (3,-1);
\draw[-,dashed=0.3, thick, black]  (2.5,-0.9) to  (2.5,1.1);
\draw[-,thick,black] (2.7,-0.95) to  (2.7,1.05);
\draw[-,thick,black] (2.3,-0.85) to  (2.3,1.15);
\draw [fill,black] (2.7,0) circle [radius=0.04];
\draw[-latex] (3.45,-0.2) to  (2.75,0.0);
\node[right] at (3.4,-0.2) {$x$};
\draw[latex-] (2.75,0.85) to  (3.4,1.4);
\node[right] at (2.8,1.6) {$\varphi^{(10)}(x)$};
\node[right] at (0.6,-0.4) {$\varphi^{(5)}(x)$};
\draw[-latex] (1.3,-0.2) to  (2.25,0.2);
\node[above] at (2.8,-1.8) {$N_{(a,b)}$, with $\scriptsize{(a,b)\neq(\phi,\phi)}$};

\end{tikzpicture}
\end{multicols}

\caption{The foliation of the 5D null leaves $N_{(a,b)}$ by 4D symplectic leaves $S_{(a,b,c)}$ in  Example~\ref{ex4}, and the dynamics of $\varphi$ on these leaves. On the left, the $\varphi$-invariant leaf $N_{(\phi,\phi)}$ and its $\varphi^{(2)}$-invariant leaves $S_{(\phi,\phi,c)}$. On the right, a $\varphi^{(5)}$-invariant leaf $N_{(a,b)}$ and its $\varphi^{(10)}$-invariant leaves $S_{(a,b,c)}$. 
\label{figura}}
\end{minipage}
\end{figure}

\end{example}

\bigbreak
\noindent {\bf Acknowledgements}.   

The work of I. Cruz and H. Mena-Matos was partially funded by FCT under the project PEst-C/MAT/UI0144/2013.

The work of  M. E. Sousa-Dias  was partially funded by FCT/Portugal through the projects  UID/MAT/04459/2013 and EXCL/MAT-GEO/0222/2012.

\end{document}